\newtheorem{assumption}{Assumption}
\newtheorem{lemma}{Lemma}
\newtheorem{theorem}{Theorem}
\newtheorem{corollary}{Corollary}
\newtheorem{remark}{Remark}
\newcommand{\s}{\mathcal{E}}
\newcommand{\ZZ}{\mathbb{Z}}
\newcommand{\wt}{\widetilde}
\newcommand{\wh}{\widehat}
\newcommand{\bm}{\boldsymbol}
\newcommand{\diag}{{\mathrm{diag}}}
\definecolor{mauve}{RGB}{0, 180, 0}
\begin{document}

\title{General solution of the Poisson equation for Quasi-Birth-and-Death processes}

\author{Dario Bini\footnote{Dipartimento di Matematica, Universit\`{a}
    di Pisa, Largo Bruno Pontecorvo, 5, 56127 Pisa,  Italy} \and
Sarah Dendievel\footnote{Ghent University, Department of Telecommunications and Information Processing, SMACS Research Group, Sint-Pietersnieuwstraat 41, B-9000 Gent, Belgium} \and
Guy Latouche\footnote{
Universit\'e libre de Bruxelles,
D\'epartement d'informatique, CP212, 1050 Bruxelles, Belgium} \and
Beatrice Meini$^{*}$}

\maketitle
\begin{abstract}
We consider the Poisson equation
$(I-P)\boldsymbol{u}=\boldsymbol{g}$, where
$P$ is the transition matrix of a Quasi-Birth-and-Death (QBD) process with infinitely many levels, $\bm g$ is a given infinite dimensional vector and $\bm u$ is the unknown. 
Our main result is to provide the general solution of this equation. To this purpose we use the block tridiagonal and block Toeplitz structure of the matrix $P$  to 
obtain a set of matrix difference equations, which are solved by constructing suitable resolvent triples.
\end{abstract}

{\bf Keywords:}
 	Poisson equation,
	QBD process,
	Matrix difference equation,
        Jordan pairs,
        Resolvent triples,
	Group inverse

{\bf AMS Subject Classification:}
  65F30, 65Q10, 60J22

\section{Introduction}
Given a row-stochastic matrix $P$ and a vector $\bm g$, the Poisson
equation is written as
\begin{equation}\label{PoissonEqGen}
(I-P)\bm u=\bm g,
\end{equation}
where $\bm u$ is the unknown vector. 
A matrix $P$ is row-stochastic if it has nonnegative entries and  $P\boldsymbol{1}=\boldsymbol{1}$, where $\boldsymbol{1}$ is the vector with components equal to 1.
The Poisson equation is
 important in Markov chain theory, where $P$ represents the transition probability matrix of an irreducible homogeneous Markov chain. Some examples of applications are: heavy-traffic limit theory e.g.  Asmussen
\cite{asmussen1992queueing}, central limit theorem e.g. Glynn
\cite{glynn1994poisson}, variance constant analysis e.g. Asmussen and
Bladt \cite{asmussen1994poisson}, mean and variance of mixing times of
QBD processes e.g. Li and Cao \cite{li2013mixing}, asymptotic variance
of single-birth process e.g. Jiang {\it{et al.}}
\cite{jiang2014poisson}.  

If the matrix $P=(p_{i,j})_{i,j=1,n}$ is finite and $\bm\pi$ is the stationary distribution of the Markov chain, i.e., $\bm\pi$ is the vector such that $\bm\pi^TP=\bm\pi^T$ and $\bm\pi^T\bm 1=1$,  then the Poisson equation has solutions if and only if $\bm\pi^T\bm g=0$. In fact, this condition is equivalent to the property that $\bm g$ belongs to the span of the columns of $I-P$. Moreover, 
 the Poisson equation
 has a unique solution, up to an additive constant, given by
\begin{equation}\label{eq:poif}
 \boldsymbol{u}=(I-P)^{\#}\boldsymbol{g}
 +\alpha \boldsymbol{1}
 \end{equation}
for an arbitrary  scalar $\alpha$, where  $H^{\#}$ denotes the group inverse of the matrix $H$
(see Meyer 
		\cite{meyer1975role}
and Campbell and Meyer
		 \cite{campbell2009generalized}).
In the case of infinite state Markov chains, 
$P=(p_{i,j})_{i,j\in\mathbb N}$ has infinite size, and
uniqueness  does not hold in general. This case has been studied by Makowski and Schwartz in
\cite{makowski2002poisson}, where the authors  give some characteristics of the
solutions.  

A class of infinite dimensional Markov chains, having great importance in applications, is given by Quasi-Birth-and Death processes.
 The transition matrix of these
processes has  the following block tridiagonal, almost block-Toeplitz structure
\begin{equation}\label{transitionMatrixP}
P=\begin{bmatrix}
B&A_1\\
A_{-1}&A_0&A_1\\
      &A_{-1}&A_0&\ddots\\
      &&\ddots&\ddots&
\end{bmatrix}
\end{equation}
where $B,A_{-1},A_{0}$ and $A_{1}$ are square matrices of order
$m<\infty$.  
The QBD processes are described and analyzed for instance
in Neuts \cite{neuts1981matrix} and Latouche and Ramaswami~\cite{latouche1999introduction}.  
%
%
In Dendievel {\it{et al.}}
\cite{dendievel2013poisson}, the authors derive particular solutions
of the Poisson equation for QBDs in terms of the deviation matrix, and
their solution is based on probabilistic arguments.  

In this paper, we
focus instead on finding all the solutions of the Poisson equation for a QBD, by exploiting  the structure of the matrix \eqref{transitionMatrixP}.  Indeed, rewriting \eqref{PoissonEqGen} in terms of
 the blocks of the transition matrix \eqref{transitionMatrixP} yields the set of  equations
\begin{align}
(B-I)\boldsymbol{u}_{0}+A_{1}\boldsymbol{u}_{1} 
&= - \boldsymbol{g}_{0}, 
\label{InitialCondEq}
\\
A_{-1}\boldsymbol{u}_{r}+(A_{0}-I)\boldsymbol{u}_{r+1}+A_{1}\boldsymbol{u}_{r+2}
&= -	\boldsymbol{g}_{r+1}, 
\label{matrixequation}
\end{align}
for $r\geq0$, where the infinite vectors $\boldsymbol{u}$ and $\boldsymbol{g}$ have been partitioned into blocks $\boldsymbol{u}_i$, $\boldsymbol{g}_i$, $i\ge0$, of length $m$.  
Equation \eqref{matrixequation}, for $r\ge 0$, represents a matrix difference equation, while \eqref{InitialCondEq} provides the initial condition.
Seen in this way, the Poisson equation for QBDs may be analyzed by relying on the theory of matrix difference equations developed by Gohberg {\it{et al.}} in \cite{glr11b}.

We  provide the general expression of the solution of the matrix difference equation~\eqref{matrixequation} by means of resolvent triples of the matrix polynomial
 \begin{equation}\label{matrixpoly}
\eta(\lambda)=
A_{-1}
+(A_0-I) \lambda
 +A_1 \lambda^{2}.
\end{equation}
 Due to the stochasticity of $P$, the polynomial $\det(\eta(\lambda))$ has a root $\lambda$ equal to $1$.
If $\lambda=1$ is a simple zero, like for positive recurrent or transient QBDs, then we may construct a resolvent triple from the minimal nonnegative solutions $G$ and $\wh G$ of the quadratic matrix equations $A_{-1}+ (A_0-I)X+ A_1 X^2=0$ and $A_{-1} X^2+ (A_0-I) X+ A_1=0$, respectively. If the QBD is
null recurrent, then  $\lambda=1$ is not a simple zero. In this case
we cannot construct a resolvent triple directly form $G$ and $\wh G$,
and we follow a different approach which consists in transforming the
matrix difference equation ~\eqref{matrixequation} into a modified difference equation
 $\wt A_{-1}\wt{\boldsymbol{u}}_{r}+(\wt A_{0}-I)
 \wt{\boldsymbol{u}}_{r+1}+
 \wt A_{1}\wt{\boldsymbol{u}}_{r+2}
= -\wt{\boldsymbol{g}}_{r+1}$, 
such that the polynomial $\det(\wt A_{-1}+(\wt A_0-I)\lambda +\wt A_1 \lambda^2 )$ has a simple root at $\lambda=1$. This new difference equation can be solved by means of resolvent triples and we give an explicit expression relating the solutions of the modified difference equation to the solutions of the original equation.
This transformation relies on the shift technique introduced by He {\it{et al.}} \cite{hmr} and developed by Bini {\it{et al.}} in \cite{blm:shift}.

Once we have a general expression of the solution of the difference equation \eqref{matrixequation}, we impose the initial condition \eqref{InitialCondEq}. In the positive and null recurrent cases, the initial condition leads to a Poisson equation of finite size, which can be solved by means of the group inverse according to equation \eqref{eq:poif}. In the transient case, the initial condition leads to a nonsingular linear system. In all cases, the expression of the general solution of the Poisson equation depends on an arbitrary vector $\bm y$. We show that
the particular 
solution obtained in Dendievel {\it{et al.}} \cite{dendievel2013poisson} by means of probabilistic arguments, corresponds to a specific choice of the vector $\bm y$.

The paper is organized as follows.  
In Section  \ref{SectionPrelim}
we recall the fundamental elements on QBD processes and introduce some key matrices.
In Section \ref{SectionResolv} we analyze the matrix difference
equation adapted to our problem and introduce the notion of resolvent
triple.  
The main results of this paper are given in Sections
\ref{SectionSolution} and \ref{s:null}. In Section
\ref{SectionSolution}, Theorem~\ref{mainresult} provides the
general solution of the Poisson equation in the case
of a positive recurrent or a transient QBD process. In Section~\ref{s:null} we deal with the null recurrent case 
and show in details two different approaches based on the shift technique.
We compare in Section \ref{SectionComp} the particular probabilistic
solution given in Dendievel {\it{et al.}}  \cite{dendievel2013poisson}
with the solution given in Section~\ref{SectionSolution}.


\section{Quasi-Birth-and-Death process }\label{SectionPrelim}

Some properties on QBD processes will be used in the next sections.
We consider a discrete-time QBD process with transition matrix $P$ given in
(\ref{transitionMatrixP}),
on the  state space 
$\mathcal S=\{(n,i):n\in\mathbb{N},i\in \mathcal E\},
\mathcal E=\{1,\ldots,m\}$,
where $n$ is called the level
and $i$ is the phase.
We define the matrix $G$
as the minimal nonnegative solution of the equation
\begin{equation}\label{Geq}
A_{-1}+(A_{0}-I)X+A_{1}X^{2}=0,
\end{equation}
and
the matrix $\wh{G}$
as the minimal nonnegative solution of the equation
\begin{equation}\label{Ghateq}
A_{1}+(A_{0}-I)X+A_{-1}X^{2}=0.
\end{equation}
The component $G_{ij}$
of the matrix $G$ is
 the conditional probability that the process goes to the level $n$ in a  finite time and that $j$ is the  first phase visited in this level, given that the process starts from state $(n + 1, i)$,
for $i,j\in \mathcal E$,
$n\in\mathbb{N}$.
The matrix $\wh{G}$ corresponds to the matrix $G$ of the level-reversed process.

Throughout the paper we assume that $P$ is irreducible, that $A_{-1}+A_0+A_1$ is irreducible and that the following property holds.
\begin{assumption}
   \label{a:one}
The doubly infinite QBD on $\ZZ \times \s$ has only one final class
$\ZZ \times \s_*$, where $\s_* \subseteq \s$.  Every other state is on a
path to the final class.
Moreover, the set $\s_*$ is not empty.
\end{assumption}

Assumption \ref{a:one} is Condition 5.2 in \cite[Page 111]{blm:book} where it is implicitly assumed that 
$\s_*$ is not empty.

The roots of the polynomial $\phi(\lambda)=\det \eta(\lambda)$, where
$\eta(\lambda)$ is defined in \eqref{matrixpoly},
 have a useful
property that we give now for future reference (Bini {\it et al.} \cite[Theorem 4.9]{blm:book}, Govorun {\it et al.}
\cite[Theorem 3.2]{glr11b}).

\begin{lemma}
   \label{t:roots}
Denote by $\xi_1, \ldots, \xi_{2m}$ the roots of $\phi(\lambda)$, organized
so that 
$|\xi_1| \leq |\xi_2| \leq \cdots \leq |\xi_{2m}|$, with $\xi_{d+1} =
\cdots = \xi_{2m} = \infty$ if the degree of $\phi(\lambda)$ is $d < 2m$.
The roots $\xi_1$, \ldots, $\xi_m$ are the eigenvalues of the matrix
$G$ and $\xi_{m+1}$, \ldots, $\xi_{2m}$ are the reciprocals of the
eigenvalues of $\wh G$ with the convention that $1/\infty=0$ and $1/0=\infty$.  The roots $\xi_m$, $\xi_{m+1}$ are real and one has 
\[|\xi_{m-1}| < \xi_m \leq 1 \leq \xi_{m+1} < |\xi_{m+2}|.
\]
Moreover 
\begin{itemize}
\item 
if the QBD is positive recurrent then
$\xi_m = 1 < \xi_{m+1}$, $G$ is stochastic and $\widehat G$ is sub-stochastic,
\item 
if the QBD is transient then $\xi_m < 1 = \xi_{m+1}$, $G$ is sub-stochastic  and
$\widehat G$ is stochastic,
\item 
if the QBD is null recurrent then $\xi_m = 1 = \xi_{m+1}$,  $G$ and $\widehat G$ are stochastic.
\end{itemize}
\end{lemma}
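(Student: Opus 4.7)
The plan is to split the $2m$ roots of $\phi$ into two groups of $m$ via Wiener-Hopf-type factorizations of $\eta(\lambda)$ induced by $G$ and $\wh G$, and then pin down their magnitudes using Perron-Frobenius together with the characterization of stochasticity of $G,\wh G$ by recurrence type.

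First I would establish the factorization. Since $G$ solves \eqref{Geq}, a direct computation gives
\[
\eta(\lambda) = \bigl(A_1\lambda + (A_0-I) + A_1 G\bigr)(\lambda I - G),
\]
so $\det(\lambda I - G)$ divides $\phi(\lambda)$ and the $m$ eigenvalues of $G$ are roots of $\phi$. For the remaining roots, observe that $\lambda^2\eta(1/\lambda)=A_1+(A_0-I)\lambda+A_{-1}\lambda^2$, whose determinant $\lambda^{2m}\phi(1/\lambda)$ factors through $\wh G$ by the analogue of \eqref{Ghateq}:
\[
\lambda^2\eta(1/\lambda)=\bigl(A_{-1}\lambda+(A_0-I)+A_{-1}\wh G\bigr)(\lambda I-\wh G).
\]
Hence the remaining $m$ roots of $\phi$ are the reciprocals of the eigenvalues of $\wh G$, with the conventions $1/0=\infty$ and $1/\infty=0$ absorbing the case $\deg\phi<2m$. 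Together these exhaust all $2m$ roots.

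Next I would localize these roots relative to the unit circle. Because $G$ and $\wh G$ are the \emph{minimal nonnegative} solutions and $P$ is a stochastic QBD, their spectral radii are at most $1$: the probabilistic interpretation of $G$ (resp.\ $\wh G$) as first-passage matrices forces $G\bm 1\leq\bm 1$ and $\wh G\bm 1\leq\bm 1$. So the eigenvalues of $G$ lie in $\overline{\mathbb D}$ while the reciprocals of eigenvalues of $\wh G$ lie in $\mathbb C\setminus\mathbb D$, which gives the ordering $|\xi_m|\leq 1\leq|\xi_{m+1}|$. The strict separations $|\xi_{m-1}|<\xi_m$ and $\xi_{m+1}<|\xi_{m+2}|$ together with reality and positivity of $\xi_m,\xi_{m+1}$ then follow from Perron-Frobenius applied to the nonnegative matrices $G$ and $\wh G$: irreducibility of $A_{-1}+A_0+A_1$ together with Assumption~\ref{a:one} (which rules out the degenerate block structure that would produce coinciding moduli) guarantees that the Perron eigenvalues are simple and strictly dominant.

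Finally I would read off the three cases. A QBD is positive recurrent iff its matrix $G$ is stochastic, i.e.\ iff the Perron eigenvalue $\xi_m$ of $G$ equals $1$; the level-reversed chain being recurrent corresponds analogously to $\wh G$ being stochastic, i.e.\ $1/\xi_{m+1}=1$. The trichotomy $\{\xi_m=1<\xi_{m+1}\}$, $\{\xi_m<1=\xi_{m+1}\}$, $\{\xi_m=\xi_{m+1}=1\}$ therefore corresponds respectively to positive recurrent, transient, and null recurrent behaviour, and the sub/stochasticity statements on $G$ and $\wh G$ in each case follow from the Perron vector analysis.

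The main obstacle I expect is the strict separation $|\xi_{m-1}|<\xi_m$ and $\xi_{m+1}<|\xi_{m+2}|$. Irreducibility of $P$ alone does not imply irreducibility of $G$; one needs Assumption~\ref{a:one} on the final class $\s_*$ of the doubly-infinite QBD, which is precisely the hypothesis that allows one to identify an irreducible principal block of $G$ (and of $\wh G$) whose Perron eigenvalue coincides with the spectral radius and is simple. Carrying out this reduction carefully, rather than handwaving it, is the technical heart of the argument.
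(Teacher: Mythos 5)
First, note that the paper does not actually prove Lemma~\ref{t:roots}: it is quoted from \cite[Theorem~4.9]{blm:book} and \cite[Theorem~3.2]{glr11b}, so your proposal can only be judged on its own merits. Your two factorizations are correct and are indeed the standard entry point: with $U=A_0+A_1G$ one has $\eta(\lambda)=(A_1\lambda+U-I)(\lambda I-G)$, and similarly for the reversed polynomial with $\wh G$. However, the step ``together these exhaust all $2m$ roots'' is a genuine gap. The two factorizations only show that the eigenvalues of $G$ and the reciprocals of the eigenvalues of $\wh G$ are \emph{among} the roots of $\phi$; to see that they account for all $2m$ of them with the correct multiplicities you need something like the canonical factorization $\eta(\lambda)=-(I-\lambda R)(I-U)(\lambda I-G)$ (which follows from \eqref{Geq}, \eqref{RfctU} and \eqref{UfctG}) together with the fact that $R$ and $\wh G$ share the same nonzero eigenvalues. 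This is exactly the point that is delicate in the null recurrent case, where $\lambda=1$ is a root of both $\det(\lambda I-G)$ and $\det(I-\lambda R)$ and its total multiplicity in $\phi$ must be split correctly between the two groups; a bare counting argument based on $|\xi_m|\le 1\le|\xi_{m+1}|$ does not settle this.

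The second and larger gap is the one you flag yourself: the strict inequalities $|\xi_{m-1}|<\xi_m$ and $\xi_{m+1}<|\xi_{m+2}|$, and the reality and positioning of $\xi_m,\xi_{m+1}$, do \emph{not} follow from Perron--Frobenius applied to $G$ and $\wh G$, because these matrices are in general reducible even when $P$ and $A_{-1}+A_0+A_1$ are irreducible, and a reducible nonnegative matrix may carry several eigenvalues of maximal modulus. The published proofs instead analyze the map $z\mapsto\rho(A_{-1}+A_0z+A_1z^2)$ on the positive reals (log-convexity, restriction to the final class $\ZZ\times\s_*$ of Assumption~\ref{a:one}) to locate and separate $\xi_m$ and $\xi_{m+1}$; you acknowledge that this is the technical heart but do not supply it, so the proof is incomplete precisely where the work lies. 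Finally, a smaller but real error: ``a QBD is positive recurrent iff $G$ is stochastic'' is false --- $G$ is stochastic iff the QBD is \emph{recurrent} (positive or null); positive recurrence is the conjunction of $G$ stochastic and $\wh G$ strictly substochastic, which is what distinguishes the first and third bullets of the lemma, so the trichotomy cannot be ``read off'' the way you describe.
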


Furthermore, thanks to the repetitive structure of $P$,
the stationary distribution 
$\boldsymbol{\pi}$ of the process,
partitioned  as $\boldsymbol{\pi}=(\boldsymbol{\pi}_i)_{i\ge 0}$,
where 
$\boldsymbol{\pi}_i$
is an $m$-dimensional vector representing the stationary probability of the level $i$, has a matrix-geometric
structure, that is,
 $\boldsymbol{\pi}_i^T=\boldsymbol{\pi}_0^T R^{i}$,
 where $R$ is the minimal nonnegative solution of the equation
\begin{equation}\label{Req}
A_1+X(A_{0}-I)+X^{2}A_{-1}=0
\end{equation}  (Latouche and Ramaswami~\cite[Theorems 6.2.1 and  6.2.10]{latouche1999introduction}).

We assume in this section and the next two that the QBD process is
positive recurrent or transient ({\em not null recurrent} for short).
 Under this assumption the series 
\begin{equation}\label{MatrixH0}
  W=\sum_{j=0}^{\infty}G^{j}(U-I)^{-1}R^{j}
  \end{equation} 
is convergent, where $U=A_0+RA_{-1}$, and it is shown in \cite{blm:shift} that the matrices $R$ and $\wh{G}$ are related by  
\begin{equation}\label{RelGandR}
WR=\wh{G}W.
\end{equation} 
Note for later reference that from \cite[Theorem 6.2.9]{latouche1999introduction} we have the relations
\begin{align}
R&=A_{1}(I-U)^{-1},
\label{RfctU}
\\
U&=A_{0}+A_1 G.
\label{UfctG}
\end{align}
In addition,
the vector $\bm \pi_0$ is a solution of 
\begin{equation}\label{e:pi0}
\bm \pi_0^{T}(I-B-A_1 G)=\bm 0,
\end{equation}
normalized by
$\bm \pi_0^{T}(I-R)^{-1}\bm 1=1$.

We show in Lemma~\ref{LemmaInVDbl} that $W$ is
invertible, so that $R$ and $\wh G$ are actually similar matrices.
The subsequent results give additional characterizations of the matrix $W$.

\begin{lemma}\label{LemmaInVDbl}
If the QBD is not null recurrent, then the matrix $W$ defined in
\eqref{MatrixH0}  and the matrix $G\wh G-I$ are nonsingular, moreover
\begin{equation}\label{eq:WGGhat}
W^{-1}= (I-U)\left(G\wh{G}-I\right).
\end{equation}
\end{lemma}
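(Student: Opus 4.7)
The plan is to establish the identity $W^{-1}=(I-U)(G\widehat G-I)$ by direct computation; once this is shown, the invertibility of $W$ and of $G\widehat G-I$ follows essentially for free.

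The key observation is that the series definition of $W$ telescopes nicely when one multiplies on both sides by $G$ and $R$. First I would compute
\begin{equation*}
GWR=\sum_{j=0}^{\infty}G^{j+1}(U-I)^{-1}R^{j+1}=\sum_{j=1}^{\infty}G^{j}(U-I)^{-1}R^{j}=W-(U-I)^{-1}.
\end{equation*}
Rearranging gives $(U-I)^{-1}=W-GWR$, or equivalently
\begin{equation*}
(U-I)(W-GWR)=I.
\end{equation*}

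Next I would invoke the relation $WR=\widehat G W$ from \eqref{RelGandR} to rewrite $GWR=G\widehat G\,W$. Substituting into the previous identity yields
\begin{equation*}
(U-I)(I-G\widehat G)W=I,\qquad\text{i.e.}\qquad (I-U)(G\widehat G-I)\,W=I.
\end{equation*}
This is a left inverse; since all matrices involved are finite and square (of order $m$), this proves $W$ is invertible with $W^{-1}=(I-U)(G\widehat G-I)$, which is exactly \eqref{eq:WGGhat}.

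Finally, to conclude that $G\widehat G-I$ itself is nonsingular, I would use the fact that $I-U$ is invertible, which is guaranteed under the not-null-recurrent assumption by \eqref{RfctU} (the formula $R=A_1(I-U)^{-1}$ presupposes this). Since $(I-U)(G\widehat G-I)=W^{-1}$ is invertible and $I-U$ is invertible, it follows that $G\widehat G-I$ is invertible as well. I do not foresee a major obstacle: the whole argument rests on the telescoping of the series for $W$ combined with the similarity $WR=\widehat G W$, both of which are already in hand.
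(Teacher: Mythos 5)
Your proof is correct and takes essentially the same route as the paper: both arguments rest on the telescoping identity $\sum_{j\ge 0}G^{j+1}(U-I)^{-1}R^{j+1}=W-(U-I)^{-1}$ combined with $WR=\widehat G W$ to obtain $(G\widehat G-I)W=(I-U)^{-1}$, from which invertibility of $W$ and of $G\widehat G-I$ and the formula \eqref{eq:WGGhat} follow. The only cosmetic difference is that the paper computes $(G\widehat G-I)W$ directly in one line, whereas you first isolate $GWR$ and then substitute; the invertibility of $I-U$ that you invoke at the end is already implicit in the definition of $W$.
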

\begin{proof}
By  (\ref{RelGandR}), we obtain
\begin{align*}
\left(G\wh{G}-I\right)W  =  \sum_{j=0}^{\infty}G^{j+1}(U-I)^{-1}R^{j+1}-W
  =  - (U-I)^{-1}.
\end{align*}
It follows that $W$ and $G\wh G-I$ are nonsingular so that \eqref{eq:WGGhat} holds.
\end{proof}

\begin{lemma}
If the QBD is not null recurrent, then the matrix $W$ of
\eqref{MatrixH0} is such that
\begin{equation}\label{Charact1H0}
WA_{1}(G\wh{G}-I) =  \wh{G}.
\end{equation}
\end{lemma}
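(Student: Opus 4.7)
The plan is to prove the identity by a direct chain of substitutions, combining the three tools already at hand: the factorization of $W^{-1}$ from the previous lemma, the relation $R = A_1(I-U)^{-1}$ from \eqref{RfctU}, and the similarity relation $WR = \widehat G W$ from \eqref{RelGandR}. No new ideas beyond manipulating these three identities should be required.

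First, I would rewrite the factor $G\widehat G - I$ by inverting the formula \eqref{eq:WGGhat}: since $W$ and $I-U$ are invertible under the not-null-recurrent hypothesis (the latter because $R = A_1(I-U)^{-1}$ is well defined), we get
\[
G\widehat G - I \;=\; (I-U)^{-1} W^{-1}.
\]
Substituting this into the left-hand side of \eqref{Charact1H0} yields
\[
W A_1 (G\widehat G - I) \;=\; W A_1 (I-U)^{-1} W^{-1}.
\]

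Next I would use \eqref{RfctU} to recognize the middle product as $R$, so that the right-hand side collapses to $W R W^{-1}$. Finally, applying \eqref{RelGandR} in the form $WR = \widehat G W$ gives $WRW^{-1} = \widehat G$, which is the claimed identity.

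There is really no obstacle here; the result is a one-line calculation once Lemma~\ref{LemmaInVDbl} and equations \eqref{RfctU}--\eqref{RelGandR} are in place. The only thing worth being careful about is ensuring all the inverses used are legitimate, but the invertibility of $W$ and of $G\widehat G - I$ was just established in the preceding lemma, and the invertibility of $I-U$ is implicit in equation \eqref{RfctU}. Thus the proof reduces to checking that the substitutions chain together correctly.
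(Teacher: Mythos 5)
Your proof is correct: every inverse you use is legitimate under the not-null-recurrent hypothesis ($W$ and $G\widehat G-I$ by Lemma~\ref{LemmaInVDbl}, $I-U$ because the very definition \eqref{MatrixH0} of $W$ already presupposes it), and the chain
\[
W A_1 (G\widehat G - I) = W A_1 (I-U)^{-1} W^{-1} = W R W^{-1} = \widehat G
\]
closes without gaps. The route is slightly different from the paper's, though it draws on the same three ingredients. The paper does not invoke the closed form $W^{-1}=(I-U)(G\widehat G-I)$ at all; instead it multiplies the claimed identity on the right by $W$, uses \eqref{RelGandR} to reduce it to $A_1 GWR - A_1 W = R$, and then verifies that equation by substituting the series definition \eqref{MatrixH0} of $W$, letting the two resulting series telescope down to $A_1(I-U)^{-1}=R$. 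Your version buys a shorter argument with no series manipulation, at the price of leaning on the inverse formula \eqref{eq:WGGhat} rather than only on the nonsingularity statement of Lemma~\ref{LemmaInVDbl}; since that formula is part of the lemma's conclusion, this is a perfectly sound trade and arguably the cleaner write-up.
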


\begin{proof}
Since the QBD is not null recurrent, $W$ is well defined and
invertible, and it follows from (\ref{RelGandR}) that equation (\ref{Charact1H0}) is equivalent to 
\begin{equation*}
WA_{1}(G\wh{G}-I)W =  WR,
\text{\quad \quad or to \quad\quad} 
A_{1}GWR-A_{1}W=R.
\end{equation*}
Replacing $W$ by its definition leads to 
\begin{equation*}
A_{1}\sum_{j=0}^{\infty}G^{j+1}(U-I)^{-1}R^{j+1}
-A_{1}(U-I)^{-1}
-A_{1}\sum_{j=1}^{\infty}G^{j}(U-I)^{-1}R^{j}
=R,
\end{equation*}
which simplifies to
$
A_{1}(I-U)^{-1}=R,
$
a true relation given by \eqref{RfctU}.
\end{proof}

We introduce some notation.
Let $M$ be a nonsingular 
 matrix such that 
\begin{equation}\label{JordanDecompGHat}
\wh{G}M=MJ,
\end{equation}
with
\begin{equation}\label{partitionJordan}
J=\left[\begin{array}{cc}
V_1 & 0\\
0 & V_0
\end{array}\right],
\end{equation}
 where $V_1$ is a nonsingular square matrix of order $p$, $0\leq p \leq m$,
and $V_0$ is square matrix of order $m-p$ 
with $\rho(V_0)=0$.
For instance, we may choose $M$ such that $J$ is the Jordan normal form of $\wh{G}$, with $V_1$ containing all the blocks for the non-zero eigenvalues and $V_0$ containing all the blocks for the zero eigenvalues.
The matrix  $M$ may be written with corresponding dimensions
as 
\begin{equation}\label{partitionM}
M=\left[ L \;  \text{\rm{\textbrokenbar}} \; K\right],
\end{equation}
here, $L$ is a matrix with dimensions $m\times p$
and $K$ is a matrix with dimensions $m \times (m-p)$.
As a consequence, we have that (\ref{JordanDecompGHat})
may be equivalently written as the system of equations
\begin{equation}\label{GhKW}
\wh{G}L  = LV_1, 
\qquad
\wh{G}K  =  KV_0. 
\end{equation}
As a corollary of \eqref{Ghateq}, we have
\begin{equation}\label{CorollDV}
\begin{split}
A_{1}L+(A_{0}-I)LV_1+A_{-1}LV_1^{2}  &= 0, \\
A_{1}K+(A_{0}-I)KV_0 + A_{-1}KV_0^{2} &= 0. 
\end{split}
\end{equation}
The next proposition will be useful in Section \ref{SectionResolv}.
\begin{lemma}\label{Lemma33}
Assume that the QBD is not null recurrent.
Take
\begin{equation}\label{eq:Y}
Y  =  
\left[ A_{1}LV_1^{-1} \;  \text{\rm{\textbrokenbar}} \; -A_{-1}KV_0-(A_{0}-I)K\right],
\end{equation}
with $M,L,K,V_1$ and $V_0$ as defined above.
The matrix $Y-A_{1}GM$ is nonsingular and the matrix $W$ defined in \eqref{MatrixH0}  is equal to
\begin{equation}\label{3rdCaract}
W=M\left(A_{1}GM-Y\right)^{-1}.
\end{equation}
\end{lemma}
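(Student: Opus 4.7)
The plan is to establish that $W^{-1}M = A_1 GM - Y$ directly; since $M$ and $W$ are both nonsingular (the latter by Lemma~\ref{LemmaInVDbl}), this immediately gives both that $A_1GM - Y$ is nonsingular (equivalently, $Y - A_1 GM$ is) and that $W = M(A_1 GM - Y)^{-1}$.

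First I would simplify $(I-U)GM$ using $U = A_0 + A_1 G$ from \eqref{UfctG} together with the defining equation \eqref{Geq} for $G$, rewritten as $A_1 G^2 = (I - A_0)G - A_{-1}$. A short manipulation gives the clean identity $(I-U)GM = A_{-1}M$. Combining with Lemma~\ref{LemmaInVDbl} and $\hat G M = MJ$, one computes
\[
W^{-1}M = (I-U)(G\hat G - I)M = (I-U)GMJ - (I-U)M = A_{-1}MJ - M + UM.
\]
Partitioning $M = [L \mid K]$ so that $MJ = [LV_1 \mid KV_0]$ and distributing, this becomes
\[
W^{-1}M = \bigl[\, A_{-1}LV_1 - (I-A_0)L + A_1 GL \;\text{\rm{\textbrokenbar}}\; A_{-1}KV_0 - (I-A_0)K + A_1 GK \,\bigr].
\]

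It remains to match this block-by-block with $A_1 GM - Y = [A_1 GL - A_1 L V_1^{-1} \mid A_1 GK + A_{-1}KV_0 + (A_0-I)K]$. The second block is an immediate identity (the terms $A_1 GK$ and $A_{-1} K V_0$ cancel across and $-(I-A_0)K = (A_0-I)K$). For the first block, cancelling $A_1 GL$ on both sides reduces the required equality to
\[
A_{-1} L V_1 + (A_0-I)L + A_1 L V_1^{-1} = 0,
\]
and multiplying on the right by $V_1$ (which is nonsingular by construction) recovers exactly the first identity of \eqref{CorollDV}, namely $A_1 L + (A_0-I)L V_1 + A_{-1}L V_1^2 = 0$. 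This asymmetric use of invertibility of $V_1$ only on the ``$L$ block'' is precisely what is encoded in the hybrid definition \eqref{eq:Y} of $Y$; the ``$K$ block'' must be written without any $V_0^{-1}$, since $V_0$ is nilpotent.

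The only subtle point, and arguably the main thing to check carefully, is this asymmetry: one must ensure that the identity can be extracted from \eqref{CorollDV} using only a right-inverse of $V_1$, without ever needing to invert $V_0$. Once that is verified, the conclusion is immediate: $W^{-1}M$ equals the nonsingular matrix $A_1 GM - Y$, hence $A_1 GM - Y$ (equivalently $Y - A_1 GM$) is invertible, and $W = M(A_1 GM - Y)^{-1}$ as claimed.
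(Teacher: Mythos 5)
Your proof is correct and follows essentially the same route as the paper's: both reduce the claim, via Lemma~\ref{LemmaInVDbl} and $\wh GM=MJ$, to the identity $A_1GM-Y=(I-U)(G\wh G-I)M$, simplify using \eqref{Geq} (your intermediate step $(I-U)G=A_{-1}$ is just the paper's substitution $(A_0-I)G+A_1G^2=-A_{-1}$ packaged differently), and close the $L$-block with the first equation of \eqref{CorollDV}. The observation that only $V_1^{-1}$, never $V_0^{-1}$, is needed matches the structure of the paper's argument exactly.
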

\begin{proof}
We prove that 
\begin{equation}
   \label{e:aa}
W(A_{1}GM-Y)=M,
\end{equation}
from which the nonsingularity of $A_{1}GM-Y$ and \eqref{3rdCaract} follow, since $W$ and $M$ are nonsingular.
By Lemma \ref{LemmaInVDbl},  (\ref{e:aa}) will follow from
\begin{equation}\label{EquivEqforY}
Y=\left(I-U\right)\left(I-G\wh{G}\right)M+A_{1}GM.
\end{equation}
By  \cite[Theorem 6.2.9]{latouche1999introduction} and \eqref{JordanDecompGHat}, 
\begin{align*}
\left(I-U\right) & (I-G\wh{G})M+A_{1}GM  \\
 & =  \left(I-\left(A_{0}+A_{1}G\right)\right)\left(M-GMJ\right)+A_{1}GM,\\
  &= M-A_{0}M+\left(\left(A_{0}-I\right)G+A_{1}G^{2}\right)MJ, \\
 & =  M-A_{0}M-A_{-1}MJ,  \qquad \qquad \mbox{by \eqref{Geq}}\\
&=\left[ (I-A_{0})L-A_{-1}LV_1
 \;  \text{\rm{\textbrokenbar}} \; 
(I-A_{0})K-A_{-1}KV_0 \right],
\end{align*}
since  $M$ and $J$ may be replaced, respectively with \eqref{partitionM} and \eqref{partitionJordan}. 
From \eqref{CorollDV}, it follows that
\[
(I-A_{0})L-A_{-1}LV_1=A_{1}LV_1^{-1}
\]
so that  \eqref{EquivEqforY} is satisfied.
\end{proof}


\section{Resolvent triple}\label{SectionResolv}
We report from Gohberg {\it et al.} \cite{gohberg2009matrix} some
definitions and results concerning the resolution of matrix difference
equations. We apply these results to the solution of the Poisson
equation.

Given the $m\times m$ matrix polynomial $B(\lambda)=\sum_{i=0}^l B_i\lambda^i$ of degree $l$, a pair of matrices $(X,T)$, with $X$ of size $m\times ml$ and $T$ of size $ml\times ml$, is called a {\em decomposable pair} for $B(\lambda)$ if:
\begin{enumerate} 
\item $X=\left[ X_{1}
		 \;  \text{\rm{\textbrokenbar}} \; 
		 X_{2}\right]$,
and
$T=\left[\begin{array}{cc}
T_1 & 0\\
0 & T_{2}
\end{array}\right],	 
	$	 
where $X_1$ is an $m\times q$ matrix, $T_1$ is a $q\times q$ matrix, for some $0\le q\le ml$;
\item
the matrix
\[\left[\begin{array}{cc}
X_1 & X_2T_2^{l-1}\\
X_1T_1 & X_2T_{2}^{l-2}\\
\vdots & \vdots\\
X_1T_1^{l-1} & X_2
\end{array}\right]	 
\]
 is nonsingular;
 \item
 $\sum_{i=0}^l B_i X_1 T_1^i=0$ and  $\sum_{i=0}^l B_i X_2 T_1^{l-i}=0$. 
\end{enumerate}
Furthermore, the triple $(X,T,Z)$ is a {\it{resolvent triple}} of $B(\lambda)$ if $(X,T)$ is a decomposable pair of $B(\lambda)$ and $Z$ is a matrix such that 
 $B^{-1}(\lambda)=XT^{-1}(\lambda)Z$, where
$T(\lambda)=\diag(\lambda I - T_{1}, \lambda T_{2}-I)$.

 We state next a finite difference equation theorem
for general matrix equations
(\cite[Theorem 8.3]{gohberg2009matrix}).
\begin{theorem}\label{MDET} 
 Let $(X,T,Z)$ be a resolvent
triple
of the $m\times m$ matrix polynomial
$B(\lambda)=\sum_{i=0}^{l}B_{i}\lambda^{i}$,
 where 
$X=\left[ X_{1}
		 \;  \text{\rm{\textbrokenbar}} \; 
		 X_{2}\right]$,
and
$T=\diag{(T_1,T_2)}$,
and let
$Z=
\mbox{\tiny $
\left[\begin{array}{cc}
Z_1\\
\\
Z_2
\end{array}\right]
$}
$
be the corresponding partition of $Z$.
The general solution of the homogeneous difference equation
\[
	B_{0}\boldsymbol{u}_{r}
	+B_{1}\boldsymbol{u}_{r+1}
	+\cdots
	+B_{l}\boldsymbol{u}_{r+l}
	=0,
\]
is
$
\boldsymbol{h}_{r}=X_1T_1^r\boldsymbol{z}
$, for $r\ge 0$, where $\boldsymbol{z}\in\mathbb{C}^{q}$
is arbitrary.
 
Let $\{\boldsymbol{f}_{r}\}_{r\in\mathbb{N}}$ be a sequence of vectors in $\mathbb{\mathbb{C}}^{m}$.
A particular solution of the non-homogeneous difference equation
\begin{equation}\label{GeneralEq}
	B_{0}\boldsymbol{u}_{r}
	+B_{1}\boldsymbol{u}_{r+1}
	+\cdot\cdot\cdot
	+B_{l}\boldsymbol{u}_{r+l}
	=\boldsymbol{f}_{r},
\end{equation}
is given by
\[
\boldsymbol{\sigma}_{r}
	=
	-\sum_{i=0}^{\nu-1}X_{2}T_{2}^{i}Z_2\boldsymbol{f}_{i+r}
	+\sum_{j=0}^{r-1} X_{1}T_{1}^{r-j-1}Z_{1}\boldsymbol{f}_{j}, ~~r\ge 0,
\]
for some positive integer $\nu$ such that $T^{\nu}_{2}=0$.
The general solution of the non-homogeneous equation \eqref{GeneralEq} is
\[
\boldsymbol{u}_r=\boldsymbol{h}_r+\boldsymbol{\sigma}_r,\qquad r\ge 0.
\]
\end{theorem}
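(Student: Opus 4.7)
The plan is to verify directly that the two pieces of the proposed general solution satisfy the required equations, using the three defining properties of the resolvent triple, and then to deduce completeness from the invertibility condition in property~2 of the decomposable pair.

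\textbf{Homogeneous part and particular-solution bookkeeping.} Substituting $\boldsymbol{h}_r = X_1 T_1^r \boldsymbol{z}$ into the homogeneous equation and factoring $T_1^r\boldsymbol{z}$ to the right gives
\[
\sum_{i=0}^l B_i \boldsymbol{h}_{r+i} = \Bigl(\sum_{i=0}^l B_i X_1 T_1^i\Bigr) T_1^r \boldsymbol{z},
\]
which vanishes by the first annihilation relation in property~3. For $\boldsymbol\sigma_r$, I would substitute into $\sum_k B_k \boldsymbol\sigma_{r+k}$ and regroup the resulting double sum according to the index $j$ of $\boldsymbol f_j$. Three ranges emerge. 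For $j<r$, only the convolutional sum contributes, and its total coefficient equals $\bigl(\sum_i B_i X_1 T_1^i\bigr)\,T_1^{r-j-1} Z_1 = 0$ again. For $j = r+s$ with $s \geq l$, only the finite sum over $X_2 T_2^i Z_2$ contributes, and its coefficient rewrites as $-\bigl(\sum_i B_i X_2 T_2^{l-i}\bigr)\,T_2^{s-l} Z_2 = 0$ by the companion annihilation relation (which I read with $T_2$ in place of $T_1$). For $j = r+s$ with $0\leq s \leq l-1$ both sums contribute, and one must show the combined coefficient equals $\delta_{s,0} I$.

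\textbf{The short-range identity (main obstacle).} The crux is
\[
-\sum_{k=0}^{s} B_k X_2 T_2^{s-k} Z_2 + \sum_{k=s+1}^{l} B_k X_1 T_1^{k-s-1} Z_1 = \delta_{s,0}\, I, \qquad 0\leq s\leq l-1.
\]
The annihilation relations alone do not suffice here; one must invoke the full resolvent property $B^{-1}(\lambda) = X\,T(\lambda)^{-1} Z$, equivalently
\[
I = B(\lambda)\,X_1 (\lambda I - T_1)^{-1} Z_1 + B(\lambda)\,X_2 (\lambda T_2 - I)^{-1} Z_2.
\]
I would expand $(\lambda T_2 - I)^{-1} = -\sum_{k\geq 0} \lambda^k T_2^k$ as a finite sum (because $T_2^{\nu} = 0$), and apply the polynomial division identity $\lambda^i (\lambda I - T_1)^{-1} = T_1^i (\lambda I - T_1)^{-1} + \sum_{p=0}^{i-1} \lambda^{i-1-p} T_1^p$ to rewrite $B(\lambda)\,X_1 (\lambda I - T_1)^{-1} Z_1$ as an honest polynomial in $\lambda$, the rational tail vanishing by the first annihilation relation. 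Matching coefficients of $\lambda^s$ in the resulting polynomial identity yields the required formula, the constant term supplying the lone $I$.

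\textbf{Completeness.} Any two solutions of the nonhomogeneous equation differ by a homogeneous one, so $\boldsymbol u_r = \boldsymbol h_r + \boldsymbol\sigma_r$ describes all solutions, provided that $\{X_1 T_1^r \boldsymbol z : \boldsymbol z \in \mathbb C^q\}$ accounts for every homogeneous solution extendable to all $r\geq 0$. Property~2 of the decomposable pair secures this last point: invertibility of the displayed block matrix makes the initial-segment map $\boldsymbol z \mapsto (X_1 T_1^i \boldsymbol z)_{i=0,\ldots,l-1}$ injective, and a dimension count then shows that the $T_2$-block in the full decomposable-pair solution space can only produce sequences of transient support (length at most $\nu$), so that forward extension of a homogeneous solution forces the $T_2$-component of its initial data to vanish and the solution is captured by the $X_1 T_1^r \boldsymbol z$ family alone.
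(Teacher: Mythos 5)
The paper offers no proof of this statement: it is quoted from Gohberg, Lancaster and Rodman \cite[Theorem 8.3]{gohberg2009matrix}, so there is no in-paper argument to compare yours against. Judged on its own terms, the verification half of your proposal is correct and is essentially the standard argument. The homogeneous check uses only $\sum_i B_iX_1T_1^i=0$; your regrouping of $\sum_k B_k\boldsymbol{\sigma}_{r+k}$ by the index of $\boldsymbol{f}_j$ into the three ranges is right; you correctly read the second annihilation relation with $T_2$ (the $T_1$ appearing in the paper's condition (iii) is a typo --- it does not even make dimensional sense as written); and your ``short-range identity'' for $0\le s\le l-1$ is exactly what falls out of matching powers of $\lambda$ in $I=B(\lambda)X_1(\lambda I-T_1)^{-1}Z_1+B(\lambda)X_2(\lambda T_2-I)^{-1}Z_2$ after the polynomial-division step kills the rational tail via the first annihilation relation, the constant term supplying the lone $I$. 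You are also right that the annihilation relations alone cannot give this identity and that the full resolvent property must be invoked there.

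The gap is in the completeness step. Invertibility of the matrix in property 2 gives you injectivity of $(\boldsymbol z,\boldsymbol w)\mapsto\bigl(X_1T_1^r\boldsymbol z+X_2T_2^{N-r}\boldsymbol w\bigr)_{0\le r\le N}$ for any window of length $N+1\ge l$, i.e.\ a \emph{lower} bound of $ml$ on the dimension of the family of solutions produced by the decomposable pair on that window. What you actually need is the matching \emph{upper} bound: that every solution of the homogeneous equation on $[0,N]$ lies in that family, equivalently that the block resultant matrix of $B(\lambda)$ over the window has full row rank. Your phrase ``the full decomposable-pair solution space'' quietly assumes this. You cannot obtain the upper bound by forward recursion from $l$ consecutive initial vectors, because $B_l$ may be singular (in this paper $B_l=A_1$ typically is), so $\boldsymbol u_{r+l}$ is not determined by its predecessors and the solution space on a window is not a priori of dimension $ml$. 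Once the finite-window exhaustiveness is in hand --- it is the finite-interval analogue of this theorem in \cite{gohberg2009matrix} --- your closing observation does finish the proof: restricting a representation on $[0,N+1]$ to $[0,N]$ and using injectivity forces $\boldsymbol z_N=\boldsymbol z_{N+1}$ and $\boldsymbol w_N=T_2\boldsymbol w_{N+1}$, hence $\boldsymbol w_N=T_2^{\nu}\boldsymbol w_{N+\nu}=0$, and every homogeneous solution on $r\ge0$ is $X_1T_1^r\boldsymbol z$. So the missing ingredient is a genuine lemma (the full-rank/resultant property), not a routine detail.
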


In our QBD problem, 
(\ref{GeneralEq})
reduces to 
(\ref{matrixequation})
and the matrix polynomial $\eta(\lambda)$  defined in
(\ref{matrixpoly})  plays the role of the matrix polynomial
$B(\lambda)$ defined in Theorem \ref{MDET}.

We use known properties of the blocks of the transition matrix to
construct a resolvent triple of $\eta(\lambda)$ and we obtain in the
next section a general solution of the Poisson equation.  Equation
(\ref{InitialCondEq}) furnishes a supplementary condition on the
vector called $\boldsymbol{z}$ in Theorem \ref{MDET}.  First, we give 
a decomposable pair of  $\eta(\lambda)$ in the following lemma.
In its proof, we find it helpful to indicate explicitly the dimensions of the identity matrix and in such cases we indicate it as an index.

\begin{lemma}\label{decompair}
Assume that the QBD \eqref{transitionMatrixP} is not null recurrent.
Let $G$ be the minimal nonnegative solution of \eqref{Geq}, let $\wh
G$ be the minimal nonnegative solution of \eqref{Ghateq} and let
$L$, $K$, $V_1$ and $V_0$ be defined as in
\eqref{JordanDecompGHat}--\eqref{partitionM}. 

Define $X=\left[ X_{1}
		 \;  \text{\rm{\textbrokenbar}} \; 
		 X_{2}\right]$
 with 
	$X_{1}=[I_m  \;  \text{\rm{\textbrokenbar}} \; L]$,
	 $X_{2}=K$,
 and define 
 $T=\diag(T_1,T_2)$, with
	$T_{1}=\diag(G,V_1^{-1})$, $T_{2}=V_0$.

The pair $(X,T)$  is a decomposable pair of $\eta(\lambda)$.
\end{lemma}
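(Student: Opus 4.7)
The plan is to verify, in turn, the three properties in the definition of a decomposable pair, applied to $B(\lambda)=\eta(\lambda)=A_{-1}+(A_0-I)\lambda+A_1\lambda^2$, i.e., with $l=2$, $B_0=A_{-1}$, $B_1=A_0-I$, $B_2=A_1$. The shape condition (property~1) is immediate from the definitions with $q=m+p$, since $X_1=[\,I_m \; \text{\rm{\textbrokenbar}} \; L\,]$ has $m$ rows and $m+p$ columns and $T_1=\diag(G,V_1^{-1})$ is $(m+p)\times(m+p)$. For the polynomial identities (property~3), I compute $X_1T_1^i=[\,G^i \;\text{\rm{\textbrokenbar}}\; LV_1^{-i}\,]$ for $i=0,1,2$ and conclude that the two block columns of $\sum_{i=0}^{2}B_i X_1 T_1^i$ are $A_{-1}+(A_0-I)G+A_1G^2$ and $A_{-1}L+(A_0-I)LV_1^{-1}+A_1LV_1^{-2}$. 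The first block is zero by \eqref{Geq}, and the second block is zero because the first identity in \eqref{CorollDV}, right-multiplied by $V_1^{-2}$, says exactly this. Similarly,
$$\sum_{i=0}^{2}B_i X_2 T_2^{2-i}=A_{-1}KV_0^2+(A_0-I)KV_0+A_1K=0$$
by the second identity in \eqref{CorollDV}.

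The heart of the proof is property~2, the nonsingularity of
$$N=\begin{bmatrix} X_1 & X_2T_2\\ X_1T_1 & X_2 \end{bmatrix}=\begin{bmatrix} I_m & L & KV_0\\ G & LV_1^{-1} & K \end{bmatrix}.$$
The idea is to use \eqref{JordanDecompGHat} to reintroduce $\wh G$. Right-multiplying $N$ by the nonsingular block-diagonal matrix $\diag(I_m,V_1,I_{m-p})$ yields
$$\begin{bmatrix} I_m & LV_1 & KV_0\\ G & L & K \end{bmatrix}=\begin{bmatrix} I_m & \wh G M\\ G & M \end{bmatrix},$$
since $[\,LV_1 \;\text{\rm{\textbrokenbar}}\; KV_0\,]=MJ=\wh G M$ by \eqref{JordanDecompGHat} and $M=[\,L \;\text{\rm{\textbrokenbar}}\; K\,]$. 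Eliminating the $(2,1)$ block by the left block-row operation that subtracts $G$ times the first block row from the second produces the block upper-triangular matrix
$$\begin{bmatrix} I_m & \wh G M\\ 0 & (I_m-G\wh G)M \end{bmatrix},$$
whose determinant equals $\det\bigl((I_m-G\wh G)M\bigr)$. Since the QBD is not null recurrent, Lemma~\ref{LemmaInVDbl} guarantees that $I_m-G\wh G$ is nonsingular, and $M$ is nonsingular by construction of the Jordan basis in \eqref{JordanDecompGHat}, so $N$ is nonsingular.

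The only delicate point is spotting the right column operation that turns the lower-right $m\times m$ block into $M$ and simultaneously the upper-right block into $\wh G M$; once this is arranged, the nonsingularity test collapses to the already-established invertibility of $I-G\wh G$ in Lemma~\ref{LemmaInVDbl}. Properties~1 and~3 are then essentially bookkeeping against the defining equations \eqref{Geq} and \eqref{CorollDV}, so no further difficulty arises.
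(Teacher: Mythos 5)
Your proof is correct and follows essentially the same route as the paper: conditions~1 and~3 are checked directly against \eqref{Geq} and \eqref{CorollDV} (the paper dismisses these as ``obvious by construction''), and condition~2 is reduced, via the identity $MJ=\wh G M$ and a column scaling by $\diag(I_m,V_1,I_{m-p})$, to the nonsingularity of $I-G\wh G$ guaranteed by Lemma~\ref{LemmaInVDbl}. The only cosmetic difference is that you finish with a block row elimination giving $(I-G\wh G)M$ in the corner, whereas the paper factors the same matrix as $\bigl[\begin{smallmatrix} I & \wh G\\ G & I\end{smallmatrix}\bigr]\diag(I,M)$ and takes the Schur complement; these are the same computation.
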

\begin{proof}
We check the conditions (i), (ii) and (iii) of the definition of decomposable pairs.
Conditions (i) and (iii) are obvious by construction. For condition (ii), we have to verify that the matrix
	\begin{align*}
	\left[\begin{array}{cc}
X_{1} & X_{2}T_{2}\\
X_{1}T_{1} & X_{2}
\end{array}\right]
&=
\left[
\begin{array}{c|cc}
I_m & L & KV_0 \\
\hline
G & L V_1^{-1} & K
\end{array}
\right]
 = 
\left[
\begin{array}{c|c}
I_m & M J \\
\hline
G & M
\end{array}
\right]
\left[
\begin{array}{c|cc}
I_m &0&0\\
\hline
0 & V_1^{-1}&0  \\
0 & 0& I_{m-p}
\end{array}
\right],
\end{align*}
is nonsingular.
By \eqref{JordanDecompGHat}, we write
	\begin{align*}
	\left[\begin{array}{cc}
X_{1} & X_{2}T_{2}\\
X_{1}T_{1} & X_{2}
\end{array}\right]
 = 
\left[
\begin{array}{c|c}
I_m & \wh{G} \\
\hline
G & I_m
\end{array}
\right]
\left[
\begin{array}{c|c}
I_m & 0 \\
\hline
0 & M
\end{array}
\right]
\left[
\begin{array}{c|cc}
I_m & 0&0\\
\hline
0 & V_1^{-1} & 0 \\
0 & 0 & I_{m-p}
\end{array}
\right],
\end{align*}
and this is a product of nonsingular matrices.  In fact, the first
factor is nonsingular since its determinant coincides with the
determinant of $I-G\wh G$, which is nonsingular 
in view of Lemma \ref{LemmaInVDbl}.  The other two factors are
nonsingular by construction.
\end{proof}

Given a decomposable pair $(X,T)$ of a matrix polynomial, Theorem 7.7
in Gohberg {\it{et al.}} \cite{gohberg2009matrix} gives an explicit
expression for a matrix $Z$ such that $(X,T,Z)$ is a resolvent triple
for the same matrix polynomial.    In the next theorem, we 
adapt this directly to our special case where the matrix polynomial
is quadratic.  The construction of such a triple $(X,T,Z)$ will help
us to build the solution of the Poisson equation, relying on
Theorem \ref{MDET}.

We partition the inverse of the matrix $M$ defined in \eqref{JordanDecompGHat} as
\begin{equation}\label{PartitionMm1}
M^{-1}=
\left[\begin{array}{c}
E\\
F
\end{array}\right],
\end{equation}
where $E$ is $p\times m$
and $F$ is $(m-p)\times m$.

\begin{theorem}\label{ThmforZ}
Assume that the QBD \eqref{transitionMatrixP} is not null recurrent.
Let $Z$ be the matrix defined by
\[
Z=
\begin{bmatrix}Z_1\\Z_2\end{bmatrix},\quad Z_1=\begin{bmatrix}W\\-EW\end{bmatrix},\quad Z_2=-V_0FW,
\]
where $V_0$ is given in \eqref{partitionJordan}, and $E$ and $F$ are defined in \eqref{PartitionMm1}.
The triple $(X,T,Z)$, where $X$ and $T$ are given in Lemma \ref{decompair},  
 is a resolvent triple of  $\eta(\lambda)$.
\end{theorem}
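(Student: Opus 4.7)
Since Lemma \ref{decompair} already establishes that $(X,T)$ is a decomposable pair of $\eta(\lambda)$, the only identity still to check is the resolvent equality
\[
\eta(\lambda)^{-1} = X_1(\lambda I - T_1)^{-1} Z_1 + X_2(\lambda T_2 - I)^{-1} Z_2.
\]
The plan is to left-multiply the right-hand side by $\eta(\lambda)$ and verify that it collapses to $I_m$. Using $T_1=\diag(G,V_1^{-1})$, $X_1=[I_m \mid L]$, and the identity $(\lambda I - V_1^{-1})^{-1} = V_1(\lambda V_1 - I)^{-1}$, the first summand splits into $(\lambda I - G)^{-1} W - L V_1(\lambda V_1 - I)^{-1} E W$, while the second summand is $- K(\lambda V_0 - I)^{-1} V_0 F W$.

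I would then compute $\eta(\lambda)$ times each rational factor separately. Since $\eta(G)=0$ by \eqref{Geq}, the factorization $\eta(\lambda) = [(U-I) + \lambda A_1](\lambda I - G)$ with $U = A_0 + A_1 G$ gives $\eta(\lambda)(\lambda I - G)^{-1} = (U-I) + \lambda A_1$. Analogous right-divisions based on the two relations in \eqref{CorollDV} produce
\[
\eta(\lambda) L V_1 (\lambda V_1 - I)^{-1} = -A_{-1} L V_1 + \lambda A_1 L
\]
and
\[
\eta(\lambda) K (\lambda V_0 - I)^{-1} = -A_{-1} K - \lambda\bigl[(A_0-I) K + A_{-1} K V_0\bigr].
\]

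Summing the three contributions (each now a polynomial of degree one in $\lambda$), the coefficient of $\lambda^1$ reorganizes to $A_1(I - LE - KF)W + \bigl[A_1 K + (A_0 - I) K V_0 + A_{-1} K V_0^2\bigr] F W$, and both brackets vanish, the first because $M^{-1} M = I$ yields $LE + KF = I_m$, and the second by the $K$-equation in \eqref{CorollDV}. For the constant coefficient, the identity $L V_1 E + K V_0 F = \wh{G}(LE + KF) = \wh{G}$ follows from \eqref{JordanDecompGHat}; then, after using $A_{-1} = -(U-I) G$ obtained from \eqref{Geq}, the constant simplifies to $(U-I)(I - G\wh{G}) W$, which equals $I_m$ by Lemma \ref{LemmaInVDbl}.

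The main obstacle is not any single deep step, but the coordination of ingredients: the quadratic equations \eqref{Geq} and \eqref{CorollDV} must be invoked in the correct places to annihilate the $\lambda^1$ coefficient; the block structure $LE + KF = I_m$ together with \eqref{JordanDecompGHat} must collapse the spectral data of $\wh G$ back into $\wh G$ itself through $L V_1 E + K V_0 F = \wh G$; and finally the characterization \eqref{eq:WGGhat} of $W^{-1}$ must be applied to produce $I_m$. Every ingredient already lives in Section \ref{SectionPrelim}, so no new analytic tool is required.
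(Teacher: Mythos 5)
Your proposal is correct, but it proves the theorem by a genuinely different route from the paper. The paper does not verify the resolvent identity directly: it invokes Theorem~7.7 of Gohberg, Lancaster and Rodman, which says that for a decomposable pair $(X,T)$ the matrix $Z=\diag(I_{m+p},V_0)\,\Gamma^{-1}\left[\begin{smallmatrix}0\\I_m\end{smallmatrix}\right]$ completes a resolvent triple, and then computes $\Gamma^{-1}$ via the Schur complement $S=Y-A_1GM$, which Lemma~\ref{Lemma33} identifies as $-W^{-1}M$. You instead multiply $X_1(\lambda I-T_1)^{-1}Z_1+X_2(\lambda T_2-I)^{-1}Z_2$ by $\eta(\lambda)$ and check it collapses to $I_m$. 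I verified your three division identities: $\eta(\lambda)=[(U-I)+\lambda A_1](\lambda I-G)$ follows from \eqref{Geq} and $U=A_0+A_1G$; the $L$- and $K$-identities follow from the two relations in \eqref{CorollDV}; the $\lambda$-coefficient of the sum vanishes by $LE+KF=I$ together with the $K$-equation of \eqref{CorollDV}; and the constant term is $(U-I)W+A_{-1}(LV_1E+KV_0F)W=(U-I)(I-G\wh G)W=I$ by $LV_1E+KV_0F=MJM^{-1}=\wh G$, $A_{-1}=-(U-I)G$, and Lemma~\ref{LemmaInVDbl}. One small point of rigor you should make explicit: the computation shows the candidate expression is a \emph{right} inverse of $\eta(\lambda)$, which suffices since $\eta(\lambda)$ is square and the identity of rational matrix functions then holds wherever $\det\eta(\lambda)\neq0$. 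What your approach buys is self-containment --- it bypasses the external Theorem~7.7 and makes Lemma~\ref{Lemma33} unnecessary, at the cost of a longer coefficient-matching computation; the paper's route is shorter on the page but leans on the cited machinery and on Lemma~\ref{Lemma33}, whose own proof consumes essentially the same ingredients (\eqref{CorollDV}, $U=A_0+A_1G$, Lemma~\ref{LemmaInVDbl}) that you deploy directly.
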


\begin{proof}
The pair $(X,T)$ in Lemma \ref{decompair} is a decomposable pair of $\eta(\lambda)$.
By  \cite[Theorem 7.7]{gohberg2009matrix},
$(X,T,Z)$ is a resolvent triple of  $\eta(\lambda)$ if $Z$ takes the form
\begin{equation}\label{ZofhteThm77}
Z=\left[\begin{array}{cc}
I_{m+p} & 0\\
0 & V_0
\end{array}\right]
\Gamma^{-1}
\left[\begin{array}{c}
0\\
I_m
\end{array}\right],
\end{equation}
 where
%
\[
\Gamma 
=\left[\begin{array}{c|cc}
I_m&L&K\\\hline
A_1G&A_1LV_1^{-1}&-A_{-1}KV_0-(A_{0}-I)K
\end{array}\right]
=
\left[\begin{array}{cc}
I_m & M\\
A_{1}G & Y
\end{array}\right],
\]
%
with $Y$ defined in \eqref{eq:Y}.
The matrix $S= Y - A_{1}G M$ is the Schur complement of $I_m$ in
$\Gamma$. By Lemma \ref{Lemma33}, the matrix $S$ is invertible and $S=
-W^{-1}M$.   We have
\begin{align*}
\Gamma^{-1}\left[\begin{array}{c}
0\\
I_m
\end{array}\right]
&=
\left[\begin{array}{c}
-MS^{-1}\\
S^{-1}
\end{array}\right] 
=\left[\begin{array}{c}
W\\
-M^{-1}W
\end{array}\right]
=
\left[\begin{array}{c}
W\\
-EW\\
-FW
\end{array}\right]
\end{align*}
by \eqref{PartitionMm1}.  Replacing this in \eqref{ZofhteThm77}
completes the proof.
\end{proof}


\section{The general solution: non null recurrent case}
  \label{SectionSolution}

We need to recall the concept of  
group inverse of a matrix.
When it exists, the group inverse $H^{\#}$  of a square matrix $H$ is the matrix solving the three equations
$HH^{\#}=H^{\#}H$,
$HH^{\#}H=H$,
$H^{\#}HH^{\#}=H^{\#}$;
if $H$ is nonsingular, then $H^\# = H^{-1}$.
For an irreducible finite Markov process with transition matrix $P$,
the group inverse of the matrix $H=I-P$
 always exists, it is uniquely characterized by
 the set of equations
 \begin{equation}\label{eq:groupinv}
 I-(I-P)(I-P)^{\#}=\boldsymbol{1 \pi}^T,\quad \boldsymbol{\pi}^T (I-P)^{\#}=\bm 0
 \end{equation}
 where
 $\boldsymbol{\pi}$ is the stationary distribution vector of the
 Markov process
(see Theorem 8.5.5 in \cite{campbell2009generalized}).
As indicated in the introduction, if $\boldsymbol{g}$
 belongs to the columns  span of $I-P$ then the equation $(I-P) \boldsymbol{u}=\boldsymbol{g}$ has the solution 
\eqref{eq:poif}.

Relying on the results of the previous section, we provide an explicit
representation for the general solution of the Poisson equation in the
case of a non null recurrent QBD.   Under
this assumption, the matrix $W$ in \eqref{MatrixH0} exists and is
nonsingular, moreover, by Theorem \ref{ThmforZ} there exists a
resolvent triple $(X,T,Z)$ of $\eta(\lambda)$.
In the case of a homogeneous equation, the next lemma provides the
general solution, it is an immediate consequence of Theorems
\ref{MDET} and \ref{ThmforZ}.

\begin{lemma}
Let $G$ be the minimal nonnegative solution of \eqref{Geq}, let $L$ and $V_1$ be the matrices of size $m\times p$ and $p\times p$, respectively, defined through \eqref{JordanDecompGHat}, \eqref{partitionJordan}, \eqref{partitionM}.
The general solution of the homogeneous equation
\begin{align*}
A_{-1}\boldsymbol{u}_{r}+(A_{0}-I)\boldsymbol{u}_{r+1}+A_{1}\boldsymbol{u}_{r+2}
&= \bm 0, 
\end{align*}
 is given by
\[
\boldsymbol{h}_{r}=
G^r \bm x + LV_1^{-r}\bm y,\quad r\geq 0,
\]
where
$\bm x\in\mathbb{C}^{m}$
and
$\bm y \in \mathbb{C}^{p}$
are arbitrary.
\end{lemma}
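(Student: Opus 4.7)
The plan is to obtain the result by directly substituting the resolvent triple constructed in the previous section into the general formula for the homogeneous solution provided by Theorem~\ref{MDET}. Since the polynomial $\eta(\lambda)$ of \eqref{matrixpoly} is of degree $l=2$ and has the form required by \eqref{GeneralEq}, Theorem~\ref{MDET} tells us that the general solution of the homogeneous equation is $\bm h_r = X_1 T_1^r \bm z$ for an arbitrary vector $\bm z \in \mathbb{C}^q$, where $X_1$ and $T_1$ are the components of the decomposable pair.

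By Lemma~\ref{decompair} and Theorem~\ref{ThmforZ}, valid under the non null recurrent assumption, we have the explicit choices
\[
X_1 = [\,I_m \;\text{\rm{\textbrokenbar}}\; L\,],\qquad T_1 = \diag(G,V_1^{-1}),
\]
so that $q = m+p$. Accordingly, I would partition the free parameter as $\bm z = \begin{bmatrix}\bm x\\ \bm y\end{bmatrix}$ with $\bm x \in \mathbb{C}^m$ and $\bm y \in \mathbb{C}^p$, and use the block diagonal structure of $T_1$ to write $T_1^r = \diag(G^r, V_1^{-r})$ for every $r\ge 0$. A direct block multiplication then yields
\[
X_1 T_1^r \bm z = [\,G^r \;\text{\rm{\textbrokenbar}}\; L V_1^{-r}\,]\begin{bmatrix}\bm x\\ \bm y\end{bmatrix} = G^r \bm x + L V_1^{-r}\bm y,
\]
which is the claimed formula. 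As $\bm z$ is an arbitrary vector in $\mathbb{C}^{m+p}$, so are $\bm x$ and $\bm y$ in their respective spaces, and the entire solution space of the homogeneous difference equation is captured.

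There is no real obstacle here: once the resolvent triple of $\eta(\lambda)$ has been established in Theorem~\ref{ThmforZ}, the lemma reduces to reading off $X_1$ and $T_1$ and applying the template formula of Theorem~\ref{MDET}. The only point worth noting for clarity is that $V_1$ is nonsingular by construction in \eqref{partitionJordan}, so the powers $V_1^{-r}$ are well defined for all $r \ge 0$.
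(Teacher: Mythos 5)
Your proof is correct and follows exactly the route the paper intends: the paper itself states that the lemma ``is an immediate consequence of Theorems~\ref{MDET} and \ref{ThmforZ},'' and your argument simply spells out that consequence by reading off $X_1=[\,I_m \;\text{\rm{\textbrokenbar}}\; L\,]$ and $T_1=\diag(G,V_1^{-1})$ from Lemma~\ref{decompair} and substituting into the formula $\bm h_r=X_1T_1^r\bm z$. No gaps; the only cosmetic remark is that for the homogeneous part one only needs the decomposable pair of Lemma~\ref{decompair}, not the full resolvent triple of Theorem~\ref{ThmforZ}.
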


The following result provides a particular solution of the non-homogeneous
equation together with the general solution.  It immediately follows
from Theorems \ref{MDET} and \ref{ThmforZ}.

\begin{lemma}\label{Lemma52}
Let $G$ be the minimal nonnegative solution of \eqref{Geq}, let $L$, $K$, $V_1$ and $V_0$ be the matrices of size $m\times p$, $m\times(m-p)$, $p\times p$ and $(m-p)\times(m- p)$, respectively, defined through \eqref{JordanDecompGHat}, \eqref{partitionJordan}, \eqref{partitionM}. Let $E$ and $F$ be the matrices defined in \eqref{PartitionMm1}, let $W$ be defined in \eqref{MatrixH0}.
A particular solution of 
\begin{align}\label{gensol1}
A_{-1}\boldsymbol{u}_{r}+(A_{0}-I)\boldsymbol{u}_{r+1}+A_{1}\boldsymbol{u}_{r+2}
&= -	\boldsymbol{g}_{r+1}, 
\end{align}
 is  given by
\begin{equation}\label{sigmar}
\boldsymbol{\sigma}_{r} = -\sum_{k=1}^{r}
\left(
G^{r-k}-LV_1^{k-r}E 
\right)W\boldsymbol{g}_{k}
-
\sum_{j=1}^{\nu-1}
KV_0^{j}FW\boldsymbol{g}_{j+r},\quad r\geq 0,
\end{equation}
where $\nu$ is the smallest integer such that $V_0^{\nu}=0$.  The
general solution of \eqref{gensol1} 
is 
\begin{align}\label{gensol2}
\boldsymbol{u}_{r}&=
G^r \bm x + LV_1^{-r}\bm y
+\boldsymbol{\sigma}_{r},
\quad r\geq 0,
\end{align}
where
$\bm x\in\mathbb{C}^{m}$
and
$\bm y \in \mathbb{C}^{p}$
are arbitrary.
\end{lemma}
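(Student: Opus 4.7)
The plan is to apply Theorem \ref{MDET} directly to the matrix difference equation \eqref{gensol1}, using the specific resolvent triple $(X,T,Z)$ of $\eta(\lambda)$ constructed in Lemma \ref{decompair} and Theorem \ref{ThmforZ}, and then perform a careful substitution and re-indexing of the resulting sums to match the statement.

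First I would recognize that \eqref{gensol1} is exactly an instance of the non-homogeneous difference equation \eqref{GeneralEq} in Theorem \ref{MDET}, with matrix polynomial $B(\lambda)=\eta(\lambda)$, degree $l=2$, coefficients $B_0=A_{-1}$, $B_1=A_0-I$, $B_2=A_1$, and driving term $\boldsymbol{f}_r=-\boldsymbol{g}_{r+1}$. Since we are in the non null recurrent case, Lemma \ref{LemmaInVDbl} ensures that $W$ is well-defined and invertible, and Theorem \ref{ThmforZ} then supplies a bona fide resolvent triple with the explicit blocks $X_1=[I_m\mid L]$, $X_2=K$, $T_1=\diag(G,V_1^{-1})$, $T_2=V_0$, $Z_1=\bigl[\begin{smallmatrix}W\\-EW\end{smallmatrix}\bigr]$, $Z_2=-V_0FW$. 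An integer $\nu$ with $T_2^{\nu}=V_0^{\nu}=0$ exists because $\rho(V_0)=0$, so the hypotheses of Theorem \ref{MDET} are met.

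Next I would substitute these data into the particular-solution formula of Theorem \ref{MDET}. Using the block structure of $X_1$ and $T_1$, the product $X_1T_1^{r-j-1}Z_1$ collapses to $(G^{r-j-1}-LV_1^{-(r-j-1)}E)W$, while $X_2T_2^{i}Z_2$ becomes $-KV_0^{i+1}FW$. The substitution $\boldsymbol{f}_{\cdot}=-\boldsymbol{g}_{\cdot+1}$ introduces overall sign changes and shifts the argument of $\boldsymbol{g}$ by one. I would then re-index the two sums: setting $k=j+1$ in the sum over $j=0,\ldots,r-1$ produces the first sum of \eqref{sigmar} with index $k=1,\ldots,r$, and setting $k=i+1$ in the sum over $i=0,\ldots,\nu-1$ produces the second sum with index $j=1,\ldots,\nu-1$ (the term $k=\nu$ drops out because $V_0^{\nu}=0$). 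This yields \eqref{sigmar}.

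Finally, for the homogeneous part, Theorem \ref{MDET} gives $\boldsymbol{h}_r=X_1T_1^r\boldsymbol{z}$ with $\boldsymbol{z}\in\mathbb{C}^{m+p}$; partitioning $\boldsymbol{z}=\bigl[\begin{smallmatrix}\boldsymbol{x}\\\boldsymbol{y}\end{smallmatrix}\bigr]$ conformably with the block structure of $T_1$ gives $\boldsymbol{h}_r=G^r\boldsymbol{x}+LV_1^{-r}\boldsymbol{y}$, and adding $\boldsymbol{\sigma}_r$ produces \eqref{gensol2}. The main (and only) obstacle is the index bookkeeping in the two sums; the rest is a direct specialization of results already established.
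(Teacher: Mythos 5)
Your proposal is correct and follows exactly the paper's route: the paper itself states that this lemma is an immediate consequence of Theorems \ref{MDET} and \ref{ThmforZ}, and your substitution of the blocks $X_1=[I_m\mid L]$, $T_1=\diag(G,V_1^{-1})$, $Z_1$, $X_2=K$, $T_2=V_0$, $Z_2=-V_0FW$ together with the re-indexing $k=j+1$ (and dropping the $V_0^{\nu}=0$ term) reproduces \eqref{sigmar} and \eqref{gensol2} precisely.
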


Lemma \ref{Lemma52} characterizes all the solutions of  the difference
equation \eqref{matrixequation}. 
If we consider also the boundary condition \eqref{InitialCondEq}, we arrive at the following result, which expresses the general solution of the Poisson equation.

In the positive recurrent case, we need to assume that the series $\sum_{k=0}^{\infty}R^k \bm g_k$ converges. As $\rho(R)<1$ for positive recurrent QBDs, this allows some flexibility for asymptotic properties of the $\bm g_k$s. 

\begin{theorem}\label{mainresult}
The general solution of the Poisson equation
(\ref{PoissonEqGen}) is given by
\begin{equation}
\boldsymbol{u}_{r}
=
G^{r} \boldsymbol{x}
+
LV_1^{-r} \boldsymbol{y}
+
\boldsymbol{\sigma}_{r},\quad r\geq 0,
 \label{OurSolution}
\end{equation}
where $\boldsymbol{\sigma}_r$ is defined in \eqref{sigmar}
and
 $\bm x$ and $\bm y$
 satisfy the following constraints.
 
If the QBD is transient, then 
$\bm y \in \mathbb{C}^{p}$
is arbitrary and 
\begin{equation}
\boldsymbol{x}
= (I-P_{*})^{-1}
\left(
\left((B-I)\wh G + A_1\right)
\left( \bm \sigma_{1} + LV_1^{-1}\bm y
\right)
+\bm g_0
\right)
\label{e:xtran}
\end{equation}
where   $P_*  =B+A_1G$ 
and
\begin{align}
\bm \sigma_{1}
&=    
-\sum_{j=0}^{\nu-1}
KV_0^{j}FW\boldsymbol{g}_{j+1},
\label{defdblu}
\end{align}
with $\nu$ being  the smallest positive integer such that $V_0^{\nu}=0$.

If the QBD is positive recurrent and if the series $\sum_{k=0}^{\infty}R^k \bm g_k$ converges,
then 
\begin{align}
\bm y
&=
\bm y^{*} + \bm y_{\perp} 
\label{ExpressY}
\\
\boldsymbol{x}
&= (I-P_{*})^{\#}
\left(
\left((B-I)\wh G + A_1\right)
\left( \bm \sigma_{1} + LV_1^{-1}\bm y
\right)
+\bm g_0
\right)
+\alpha \bm 1
\label{ExpressX}
\end{align}
where 
\begin{equation}\label{eq:y}
\boldsymbol{y}^*=-\sum_{k=1}^\infty V_1^kEW\boldsymbol{g}_k,
\end{equation}
the vector 
$\bm y_{\perp}\in \mathbb{C}^{p}$
is any vector in the hyperplane 
$\bm \pi_{0}^{T} W^{-1} L \bm y = \bm \pi^{T}\bm g$,
and 
$\alpha$ is an arbitrary constant.

\end{theorem}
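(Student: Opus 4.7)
The plan is to combine Lemma \ref{Lemma52}, which already supplies the general solution $\bm u_r = G^r\bm x + LV_1^{-r}\bm y + \bm\sigma_r$ of the interior difference equation \eqref{matrixequation}, with the boundary relation \eqref{InitialCondEq}, and then read off the constraints on $\bm x$ and $\bm y$ according to whether the QBD is transient or positive recurrent. Substituting $\bm u_0 = \bm x + L\bm y + \bm\sigma_0$ and $\bm u_1 = G\bm x + LV_1^{-1}\bm y + \bm\sigma_1$ into \eqref{InitialCondEq} and grouping terms yields
\[
-(I-P_*)\bm x + \bigl((B-I)L + A_1 LV_1^{-1}\bigr)\bm y = -\bm g_0 - (B-I)\bm\sigma_0 - A_1\bm\sigma_1,
\]
with $P_* = B + A_1 G$.

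The next step is to unify coefficients and inhomogeneities through the composite factor $(B-I)\wh G + A_1$. The Jordan relation $\wh G L = LV_1$ from \eqref{GhKW} gives $\wh G(LV_1^{-1}) = L$, whence $(B-I)L + A_1 LV_1^{-1} = ((B-I)\wh G + A_1)LV_1^{-1}$. Likewise $\wh G K = KV_0$ combined with the nilpotency $V_0^\nu = 0$ produces the identity
\[
\wh G\bm\sigma_1 = -\sum_{j=0}^{\nu-1} KV_0^{j+1} FW\bm g_{j+1} = -\sum_{i=1}^{\nu-1} KV_0^i FW\bm g_i = \bm\sigma_0,
\]
so that $(B-I)\bm\sigma_0 + A_1\bm\sigma_1 = ((B-I)\wh G + A_1)\bm\sigma_1$. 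The boundary equation collapses to
\[
(I-P_*)\bm x = \bm g_0 + \bigl((B-I)\wh G + A_1\bigr)\bigl(\bm\sigma_1 + LV_1^{-1}\bm y\bigr).
\]

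In the transient case, Lemma \ref{t:roots} says $G$ is strictly sub-stochastic; together with $(B + A_1)\bm 1 = \bm 1$ and the irreducibility of $P$ this forces $\rho(P_*) < 1$, so $I - P_*$ is invertible and \eqref{e:xtran} follows with $\bm y \in \mathbb{C}^p$ arbitrary.

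In the positive recurrent case, $G$ is stochastic, so $P_*$ is stochastic, $I-P_*$ is singular, and \eqref{e:pi0} identifies $\bm\pi_0$ as its left null vector. Solvability requires $\bm\pi_0^T\cdot\text{RHS} = 0$. Using $\bm\pi_0^T(B-I) = -\bm\pi_0^T A_1 G$ and Lemma \ref{Charact1H0}, which reads $A_1(G\wh G - I) = W^{-1}\wh G$, we obtain $\bm\pi_0^T((B-I)\wh G + A_1) = -\bm\pi_0^T W^{-1}\wh G$, and the compatibility condition reduces to
\[
\bm\pi_0^T W^{-1} L\bm y = \bm\pi_0^T \bm g_0 - \bm\pi_0^T W^{-1}\wh G\bm\sigma_1.
\]
The main technical obstacle is to identify the right-hand side in probabilistic terms. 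For this I would use the Jordan decomposition $\wh G^k = LV_1^k E + KV_0^k F$ to expand $-\wh G\bm\sigma_1 = \sum_{i\ge 1}\wh G^i W\bm g_i + L\bm y^*$ with $\bm y^*$ precisely as in \eqref{eq:y}; then $R = W^{-1}\wh G W$ from \eqref{RelGandR} together with $\bm\pi_i^T = \bm\pi_0^T R^i$ telescopes the right-hand side into $\bm\pi^T\bm g - \bm\pi_0^T\bm g_0 + \bm\pi_0^T W^{-1} L\bm y^*$. Convergence here is exactly where the hypothesis on $\sum_k R^k\bm g_k$ is consumed. Writing $\bm y_\perp = \bm y - \bm y^*$, the compatibility condition becomes the hyperplane equation $\bm\pi_0^T W^{-1} L\bm y_\perp = \bm\pi^T\bm g$, which is \eqref{ExpressY}. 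Once compatibility is in force, the finite singular system $(I-P_*)\bm x = \text{RHS}$ has its general solution given by \eqref{eq:poif}, which produces $\bm x = (I-P_*)^{\#}\cdot\text{RHS} + \alpha\bm 1$, i.e., \eqref{ExpressX}.
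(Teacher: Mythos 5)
Your proposal is correct and follows essentially the same route as the paper's own proof: substitute the general solution from Lemma \ref{Lemma52} into the boundary condition \eqref{InitialCondEq}, use $\wh G L V_1^{-1}=L$ and $\wh G\bm\sigma_1=\bm\sigma_0$ to collapse the boundary equation to $(I-P_*)\bm x=\bm g_0+((B-I)\wh G+A_1)(\bm\sigma_1+LV_1^{-1}\bm y)$, then invert $I-P_*$ in the transient case and, in the positive recurrent case, derive the solvability condition via \eqref{Charact1H0}, the decomposition $\wh G^k=LV_1^kE+KV_0^kF$, and the similarity $WR=\wh GW$ before applying the group inverse. The only differences are cosmetic (e.g.\ a slightly more explicit justification that $I-P_*$ is nonsingular in the transient case).
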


\begin{proof}
Firstly, we show that $\bm \sigma_{1}$ is given by \eqref{defdblu}: from \eqref{sigmar}, we have 
\begin{align}
\bm \sigma_1
&= -(I-LE)W \bm g_1
- \sum_{j=1}^{\nu-1}KV_0^{j} F W \bm g_{j}
=
\sum_{j=0}^{\nu-1}KV_0^{j} F W \bm g_{j},\nonumber
\end{align}
since $I = MM^{-1} = LE+KF$. Furthermore,
\begin{align}
 \bm\sigma_{0}&= \sum_{j=1}^{\nu-1}KV_0^{j} F W \bm g_{j}
 \nonumber\\
 &= KV_0 F \bm \sigma_1.
 \nonumber
\end{align}
%
%
The boundary equation (\ref{InitialCondEq}),
together with \eqref{gensol2}
gives
\begin{align}
\nonumber
(I-B-A_1G) \bm x &=
(B-I) (L \bm y + \bm \sigma_0)
+ A_1 (LV_{1}^{-1}\bm y + \bm \sigma_{1})
+\bm g_0
\\
\label{ProofPoissFin}
\text{or}
\quad \quad 
(I-P_*)\bm x
&=
\left(
\left(B-I\right)\wh G +A_1
\right)
(\bm \sigma_{1} + LV_1^{-1}\bm y)
+\bm g_0.
\end{align}
To see this, we observe that $\wh GL=LV_1$ and $\wh G K = K V_0$
so that
$L\bm y = \wh G L V_{1}^{-1} \bm y$
on the one hand and that
\begin{equation*}
\bm \sigma_0 =
KV_0F \bm \sigma_{1} 
= \wh G KF \bm \sigma_{1}
= \wh G \bm \sigma_{1}
\end{equation*}
on the other hand since $FK=I$.

If the QBD is transient, then $G$ is sub-stochastic, the matrix $I-P_*$ is nonsingular, and the constraint \eqref{e:xtran} on $\bm x$ immediately results from \eqref{ProofPoissFin} while there is no constraint on $\bm y$.

If the QBD is recurrent, then $G$ is stochastic and $P_*$ 
is the transition matrix of an 
irreducible finite Markov process, so that \eqref{ProofPoissFin} is a
finite Poisson equation and
\eqref{ExpressX} follows, provided that the right-hand side is in the span of the columns of $I-P_*$, that is, 
provided that 
\begin{equation}\label{e:contr}
\bm \pi_{0}^{T}
\left(
\left(B-I\right)\wh G +A_1
\right)
(\bm \sigma_{1} + LV_1^{-1}\bm y)
+\bm \pi_{0}^{T}\bm g_0
=0,
\end{equation}
as $\bm \pi_0^{T}(I-P_*)=\bm 0$ by \eqref{e:pi0}.
We have
\begin{align*}
\bm \pi_{0}^{T}
\left(
\left(B-I\right)\wh G +A_1
\right)
&=
\bm \pi_{0}^{T}
A_1(I-G\wh G)
\quad \quad \text{as $\bm \pi_{0}^{T}(I-B-A_1G)=\bm 0$,}
\\
&= 
-\bm \pi_{0}^{T} W^{-1} \wh G 
\quad \quad \text{by \eqref{Charact1H0},}
\end{align*}
and,
as we have seen earlier that $\wh GLV_{1}^{-1}=L$, the constraint \eqref{e:contr} may be written as
\begin{equation}\label{e:contr2}
\bm \pi_{0}^{T} W^{-1} L\bm y
= \bm \pi_{0}^{T} \bm g_0 -\bm \pi_{0}^{T}W^{-1} \wh G \bm \sigma_{1}.
 \end{equation} 

Now, having assumed that the series converges, we have
\begin{align}
 \sum_{k=0}^{\infty}R^k \bm g_k
&=
 \bm g_0
+ 
\sum_{k=1}^{\infty}W^{-1}\wh G^{k}W \bm g_k
\qquad \qquad \text{by \eqref{RelGandR},}
\label{e:un}
\\
&=
 \bm g_0
+
 W^{-1} \wh G
\sum_{k=0}^{\infty} K V_0^k F W \bm g_{k+1}
+
 W^{-1} \wh G
\sum_{k=0}^{\infty} L V_1^k E W \bm g_{k+1}
\nonumber
 \intertext{since $\wh G^k = KV_0^k F+LV_1^k E$,}
 &=
 \bm g_0
 -
  W^{-1} \wh G \bm \sigma_{1}
  -
    W^{-1} \wh G L V_{1}^{-1} \bm y^*.
    \label{e:utile}
\end{align}
Thus, \eqref{e:contr2} may be written as
\[
 \bm \pi_{0}^{T}  W^{-1} L \bm y
 = \bm \pi_0^{T} \sum_{k=0}^{\infty}R^k \bm g_k
 +  \bm \pi_{0}^{T} W^{-1} \wh G L V_{1}^{-1} \bm y^*
=
\bm \pi^{T} \bm g 
  +  \bm \pi_{0}^{T} W^{-1}  L  \bm y^*.
\]
This proves \eqref{ExpressY}.
\end{proof}

The expression of $\boldsymbol{u}_r$ given in \eqref{OurSolution} can be equivalently rewritten in a numerically more convenient form as follows
\begin{equation}\label{eq:numeric}
\boldsymbol{u}_r=G^r\boldsymbol{x}-\sum_{k=0}^{r-1} G^kW\boldsymbol{g}_{r-k}+
LV_1^{-r}\left(\boldsymbol{y}+\sum_{k=1}^r V_1^rEW\boldsymbol{g}_k \right)-
\sum_{j=1}^{\nu-1}KV_0^jFW\boldsymbol{g}_{j+r}.
\end{equation}

Some simplification occurs when  the matrix $A_1$ is nonsingular: then
the
matrix $R$ is nonsingular and the expression for the general solution simplifies as follows.

\begin{corollary}
Assume that $\det A_1\ne 0$.  Let $G$ and $R$ be the minimal
nonnegative solutions of \eqref{Geq} and \eqref{Req}.  The general
solution of the Poisson equation \eqref{PoissonEqGen} is given by
\[
\boldsymbol{u}_{r}
=
G^{r} \boldsymbol{x}
+
WR^{-r} \widetilde{\boldsymbol{y}}
-\sum_{k=1}^{r}
\left(
G^{r-k}W-WR^{k-r} 
\right)\boldsymbol{g}_{k},
\quad r\geq 0,
\]
where the vectors $\bm x$ and $\widetilde{\bm y}$ satisfy the following constraints.

If the QBD is transient, then
 $\widetilde{\boldsymbol{y}}\in \mathbb C^{m}$ is arbitrary  and
\[
\boldsymbol{x}
= (I-P_{*})^{-1}
\left((B-I)W \widetilde{\boldsymbol{y}}+A_{1}WR^{-1} \widetilde{\boldsymbol{y}}+ 
\boldsymbol{g}_{0}
\right),
\]
with $P_*=B+A_1G$.

If the QBD is positive recurrent and the series $\sum_{k=0}^{\infty}R^k \bm g_k$ converges, then 
$\widetilde{\boldsymbol{y}}= \widetilde{\bm y}^*+ \widetilde{\bm y}_{\perp}$ and
\[
\boldsymbol{x}
= (I-P_{*})^{\#}
\left((B-I)W \widetilde{\boldsymbol{y}}+A_{1}WR^{-1} \widetilde{\boldsymbol{y}}+ 
\boldsymbol{g}_{0}
\right)
+ \alpha \bm 1,
\]
where 
$\widetilde{\bm y}^*=- \sum_{k=1}^{\infty}R^k \bm g_k$,
$\widetilde{\bm y}_{\perp}$ is any vector in the hyperplane 
$\bm \pi_{0}^{T} \widetilde{\bm y}_{\perp} = \bm \pi^{T} \bm g$,
and $\alpha$ is arbitrary.
\end{corollary}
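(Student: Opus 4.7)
The plan is to derive the corollary from Theorem~\ref{mainresult} by specializing to the case $\det A_1 \neq 0$. The key observation is that when $A_1$ is nonsingular, so is $\wh G$, and consequently in the Jordan decomposition \eqref{JordanDecompGHat}--\eqref{partitionM} one has $p = m$ and the block $(K, V_0)$ is absent. Once this collapse is established, everything else reduces to a single change of variable.

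First I would verify that $\wh G$ is invertible. By \eqref{RfctU}, $R = A_1(I-U)^{-1}$ is a product of two nonsingular matrices, hence invertible. By Lemma~\ref{LemmaInVDbl}, $W$ is invertible, and \eqref{RelGandR} then yields $\wh G = W R W^{-1}$, so $\wh G$ is similar to $R$ and in particular nonsingular. Therefore every eigenvalue of $\wh G$ is nonzero, so $V_0$ in \eqref{partitionJordan} is a $0 \times 0$ block: $p = m$, $M = L$, $V_1 = J$, $E = M^{-1}$, and the matrices $K$ and $F$ are absent.

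Next I would establish the master identity $L V_1^n E W = W R^n$ for every $n \in \mathbb{Z}$. Iterating \eqref{RelGandR} gives $\wh G^n W = W R^n$ (meaningful for negative $n$ since $\wh G$ and $R$ are now invertible), while under the preceding simplification $L V_1^n E = M J^n M^{-1} = \wh G^n$. Substituting this with $n = k-r$ into \eqref{sigmar}, and noting that the second sum vanishes because $K$ is absent, one obtains
\[
\bm\sigma_r = -\sum_{k=1}^{r}\bigl(G^{r-k}W - W R^{k-r}\bigr)\bm g_k,
\]
which matches the corresponding term in the corollary. The change of variable $\widetilde{\bm y} = W^{-1}M\bm y$ then turns the homogeneous part into $L V_1^{-r}\bm y = M J^{-r}\bm y = W R^{-r}\widetilde{\bm y}$, again by the master identity, producing the announced form of $\bm u_r$.

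Finally I would translate the boundary constraints. In the transient case, the term $\bm\sigma_1$ in \eqref{defdblu} is zero since $K$ is absent. Using $\wh G L V_1^{-1} = L = M$ from \eqref{GhKW} together with $M \bm y = W \widetilde{\bm y}$, the bracket in \eqref{e:xtran} becomes $(B-I)W\widetilde{\bm y} + A_1 W R^{-1}\widetilde{\bm y} + \bm g_0$, recovering the transient formula of the corollary. In the positive recurrent case, the same substitution applied to \eqref{eq:y} gives $\widetilde{\bm y}^* = W^{-1}M\bm y^* = -\sum_{k=1}^\infty W^{-1}\wh G^k W \bm g_k = -\sum_{k=1}^\infty R^k \bm g_k$, and the hyperplane $\bm \pi_0^T W^{-1} L \bm y_\perp = \bm\pi^T\bm g$ becomes $\bm\pi_0^T\widetilde{\bm y}_\perp = \bm\pi^T\bm g$, while \eqref{ExpressX} transforms exactly as in the transient case. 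The main obstacle is purely bookkeeping: one must confirm that each building block of \eqref{OurSolution} transforms consistently under $\widetilde{\bm y} = W^{-1}M\bm y$ and that the Jordan-zero components contribute nothing. Once the master identity is in hand, every remaining step is routine substitution.
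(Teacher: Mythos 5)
Your proposal is correct and follows essentially the same route as the paper: observe that $\det A_1\neq 0$ forces $R$, hence $\wh G$, to be nonsingular, so the nilpotent block $(K,V_0)$ disappears, then use $\wh G^{k}W=WR^{k}$ to convert every occurrence of $LV_1^{k}E$ into $WR^{k}W^{-1}$ and absorb $W^{-1}$ into the new variable $\widetilde{\bm y}$. The only cosmetic difference is that the paper normalizes $M=I$, $L=E=I$, $V_1=\wh G$ at the outset (so its substitution is $\widetilde{\bm y}=W^{-1}\bm y$), whereas you carry a general $M$ and set $\widetilde{\bm y}=W^{-1}M\bm y$; the content is identical.
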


\begin{proof}
Since $R$ is nonsingular, $\wh G$ is nonsingular as well, $V_1$ is an
$m\times m$ matrix, $V_0$ does not exist, and we may take $M=I$,
$V_1=\wh G$, and $L=E=I$. In view of \eqref{RelGandR}, we have $V_1^{k}=\wh
G^{k}=WR^kW^{-1}$ for any integer $k$.
With this choice of matrices, we have
\[
V_1^{k-r}EW{\boldsymbol g}_k=WR^{k-r}W^{-1} W{\boldsymbol g}_k=WR^{k-r}{\boldsymbol g}_k
\]
and \eqref{sigmar} becomes
\[
\boldsymbol{\sigma}_r=-\sum_{k=1}^r(G^{r-k}W-WR^{k-r})\boldsymbol{g}_k.
\]
Set $\tilde{\boldsymbol{y}}=W^{-1} \boldsymbol{y}$ so that
$V_1^{-r}\boldsymbol{y}=WR^{-r}\tilde {\boldsymbol{y}}$. Replace the
latter expression in \eqref{OurSolution} and get
\[
\boldsymbol{u}_r=G^r\boldsymbol{x}+WR^{-r}\tilde{\boldsymbol{y}}+\boldsymbol{\sigma}_r.
\]
The reminder of the proof results from (\ref{e:xtran}, \ref{ExpressY}--\ref{eq:y}).
%
%
\end{proof}

The expression of $\boldsymbol{u}_r$ given in the above corollary can be equivalently rewritten in a numerically more convenient form as follows
\[
\boldsymbol{u}_r=G^r\boldsymbol{x}-\sum_{k=0}^{r-1} G^kW\boldsymbol{g}_{r-k}+
WR^{-r}\left(\boldsymbol{y}+\sum_{k=1}^r R^k\boldsymbol{g}_k \right).
\]

To conclude this section, we briefly examine the asymptotics of $\bm
u_r$ in \eqref{OurSolution} as $r \rightarrow \infty$ and we discuss
the effect of the powers of $G$ and of $V_1^{-1}$ for different
choices of $\boldsymbol{y}$ --- note that $\bm x$ is actually a function of the
arbitrary vector $\bm y$.

The powers of $G$ are bounded, since
$G\boldsymbol{1}\le\boldsymbol{1}$ and so, $\lim_{r\rightarrow \infty}
G^r \bm x$ is bounded for any given $\bm y$.  Concerning the powers of
$V_1^{-1}$, recall that the eigenvalues of $V_1$ coincide with the
nonzero eigenvalues of $R$. Thus, in the positive recurrent case where
the spectral radius $\rho(R)$ of the matrix $R$ is such that
$\rho(R)<1$, all eigenvalues of $V_1^{-1}$ are strictly greater than
one in absolute value and the powers of $V_1^{-1}$ diverge.
In the transient case where $\rho(R) = 1$, the powers of $V_1^{-1}$ diverge as well if $p>1$.
The term in the general solution \eqref{eq:numeric} 
 which involves $V_1$ is
\[
\bm s_r =LV_1^{-r}\left( \boldsymbol{y}+\sum_{k=1}^r V_1^{k}EW\boldsymbol{g}_k\right).
\]
Assume that  the
series $\sum_{k=0}^{\infty} R^k \boldsymbol{g}_k $ is
convergent. Under this assumption, the series 
$\sum_{k=1}^\infty V_1^kEW\boldsymbol{g}_k$ is convergent as well. Thus,
choosing  
 $\boldsymbol{y}= \boldsymbol{y}^*$ from
\eqref{eq:y}
implies that 
\[\boldsymbol{s}_r
=
-LV_1^{-r}\sum_{k=r+1}^\infty  V_1^kEW\boldsymbol{g}_k
=
- L\sum_{k=r+1}^\infty V_1^{k-r}EW\boldsymbol{g}_k.\]
 Whence $\boldsymbol{s}_r$ is bounded.
  
We discuss further the significance of the vector $\bm
y^*$ in Section~\ref{SectionComp}.


\section{The general solution: null recurrent case}
   \label{s:null}
   If the QBD is null recurrent, then $\xi_m=\xi_{m+1}=1$ and we
   cannot directly apply the arguments of the previous
   section. Indeed, both $G$ and $R$ have spectral radius equal to
   one, the matrix $W$ in (\ref{MatrixH0}) Theorem \ref{ThmforZ} is
   not defined, and the standard triple which allowed us to build a
   solution cannot be constructed.

   However, after a suitable manipulation, we can transform the
   original difference equation into a new one where we can express
   the solution through a standard triple. This manipulation is based
   on the shift technique of \cite{hmr,blm:shift}, which enables us to
   construct a new matrix polynomial having the same eigenvalues as
   the original polynomial except for $\xi_m$ and $\xi_{m+1}=1$ which
   are replaced by zero or by infinity. The new quadratic matrix
   polynomial is associated with a new matrix difference equation
   which can be solved by means of the resolvent triples as in Section
   \ref{SectionSolution}. We will prove that from the solution of the transformed
   matrix difference equation we can recover the solution of the
   original equation.

This transformation can be performed in different ways, say, by
applying a left shift, or a right shift, or combining together the two
transformations.

In this section we recall the shift technique from \cite{blm:shift},
 while in the next
section we describe the transformation which relates the solutions of
the matrix difference equations obtained this way.

In addition to $G$, $R$, and $\wh G$, define $\wh H= A_0-I+A_{-1}\wh G$, 
together with $\wh R=A_{-1} (I-\wh H)^{-1}$; the matrix $\wh R$
coincides with the minimal nonnegative solution of the matrix equation
\[
A_{-1}+X (A_0-I)+ X^2 A_{1}=0.
\]

For a null recurrent QBD we have $\rho(G)=\rho(\wh G)=\rho(R)=\rho(\wh
R)=1$, and 1 is a simple eigenvalue in each case.  If $X$ is any
of the four matrices, denote by $\bm w_X$ and $\bm v_X$ a right and a
left nonnegative eigenvectors, respectively, of $X$ corresponding to
the eigenvalue 1.  The main results of \cite{blm:shift} concerning the
right and the left shift are as follows.

\begin{theorem}[Right shift]\label{thm:rs}
Take $Q=\bm w_{G} \bm v_{\wh G}^T$, with  $\bm v_{\wh G}^T\bm
w_{G}=1$, and 
define 
\[
\wt A_{-1}= A_{-1}(I-Q), \qquad \wt A_0=A_0+A_1Q, \qquad \wt A_1=A_1.
\]
Normalize $\bm w_{\wh R}$ so that $\bm v_{\wh G}^T\wh H^{-1}\bm
w_{\wh R}=-1$.

The matrix equations 
\[
\wt A_{-1}+(\wt A_0-I)X+\wt A_1X^2=0 \qquad \mbox{and} \qquad \wt A_{1}+(\wt A_0-I)X+\wt A_{-1}X^2=0
\]
have the solutions $\wt G=G-Q$ and $\ddot{G}=\wh G+(\bm w_G +\wh
H^{-1}\bm w_{\wh R}) \bm v_{\wh G}^T$, respectively. Moreover,
$\rho(\wt G)<1$ and $\rho(\ddot{G})=1$, $\det(I-\wt G\ddot G)\ne 0$, and 
the matrix 
\begin{equation*}
\wt W=\sum_{i=0}^\infty \wt G^i(U-I)^{-1}R^i
\end{equation*}
 is nonsingular.
\end{theorem}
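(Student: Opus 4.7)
My plan is to prove the four claims in sequence: first, direct substitution verifies the two quadratic matrix equations; second, a rank-one perturbation analysis yields the spectral radii; third, an identity of the form $(\wt G \ddot G - I)\wt W = -(U-I)^{-1}$ simultaneously gives nonsingularity of $I - \wt G \ddot G$ and of $\wt W$. Much of this is established in \cite{blm:shift} and I would cite that reference for the details, but the essential structure of the argument is as follows.

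For the first equation, I would substitute $\wt G = G - Q$ into $\wt A_{-1} + (\wt A_0 - I)X + \wt A_1 X^2$ and expand, exploiting the identities $Q^2 = Q$ (from $\bm v_{\wh G}^T \bm w_G = 1$) and $GQ = Q$ (from $G\bm w_G = \bm w_G$; note that in the null recurrent case $G$ is stochastic, so $\bm w_G = \bm 1$ up to normalization). Using the defining equation $A_{-1} + (A_0 - I) G + A_1 G^2 = 0$, all $G$-terms cancel and the remainder collapses to $(I - A_{-1} - A_0 - A_1)Q$, which vanishes because $(A_{-1} + A_0 + A_1)\bm 1 = \bm 1$ by stochasticity of $P$. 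The analogous verification for $\ddot G$ is parallel but more delicate: after expansion one reaches a residual term involving $\wh R$, which is annihilated thanks to the defining equation $A_{-1} + \wh R (A_0 - I) + \wh R^2 A_1 = 0$, the identity $\wh H = A_0 - I + A_{-1}\wh G$, and the normalization $\bm v_{\wh G}^T \wh H^{-1} \bm w_{\wh R} = -1$; this last choice is precisely tuned to cancel the residual.

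For the spectral claims, I would view $\wt G = G - \bm w_G \bm v_{\wh G}^T$ as a rank-one update designed to annihilate the eigenvalue $1$ of $G$. Indeed, $\wt G \bm w_G = \bm w_G - \bm w_G = 0$, and on the complementary $G$-invariant subspace determined by the kernel of $\bm v_{\wh G}^T$ one can check that $\wt G$ agrees with $G$; by Lemma~\ref{t:roots} the remaining eigenvalues of $G$ have modulus strictly less than $1$, so $\rho(\wt G) < 1$. A dual argument for $\ddot G$ applied to $\wh G$ preserves the eigenvalue $1$ but displaces the other eigenvalue of $\wh G$ at $1$, yielding $\rho(\ddot G) = 1$ with $1$ still simple.

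For the final two claims, I would follow the blueprint of the proof of Lemma~\ref{LemmaInVDbl}: convergence of $\wt W = \sum_{i \ge 0} \wt G^i (U-I)^{-1} R^i$ follows because $\sigma(\wt G)$ lies strictly inside the unit disc while $\sigma(R) \subseteq \{|\lambda| \le 1\}$, so the spectra of $\wt G$ and $R^{-1}$ are disjoint and the Sylvester-type series converges. Then, mimicking the telescoping calculation of Lemma~\ref{LemmaInVDbl}, one obtains $(\wt G \ddot G - I) \wt W = -(U-I)^{-1}$, which yields nonsingularity of both $I - \wt G \ddot G$ and $\wt W$ at once. The main obstacle I expect is verifying $\det(I - \wt G \ddot G) \ne 0$ cleanly: one has to rule out the possibility that $\wt G \ddot G$ inherits an eigenvalue $1$ from the eigenvalue $1$ of $\ddot G$. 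The specific normalization of $\bm w_{\wh R}$ is designed exactly to avoid this collision, and tracking how this normalization propagates through the product is the subtle step of the argument.
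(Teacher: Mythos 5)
First, note that the paper does not prove Theorem~\ref{thm:rs}: it is quoted as one of ``the main results of \cite{blm:shift}'' and used as a black box, so there is no in-paper proof to compare your attempt against. Your outline follows the same strategy as the cited source (direct substitution into the quadratics, rank-one spectral deflation, a telescoping identity for the nonsingularity claims), and the first step is carried out correctly: with $Q^2=Q$ and $GQ=Q$, the residual after substituting $\wt G=G-Q$ is indeed $(I-A_{-1}-A_0-A_1)Q$, which vanishes because $\bm w_G=\bm 1$ in the null recurrent case.

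Two points need repair. (i) Your justification of the spectral claims is not correct as written: the kernel of $\bm v_{\wh G}^T$ is not a $G$-invariant subspace ($\bm v_{\wh G}$ is a left eigenvector of $\wh G$, not of $G$), so you cannot argue that $\wt G$ ``agrees with $G$'' there. The clean argument is Brauer's theorem: since $G\bm w_G=\bm w_G$, the spectrum of $G-\bm w_G\bm v_{\wh G}^T$ is that of $G$ with the simple eigenvalue $1$ replaced by $1-\bm v_{\wh G}^T\bm w_G=0$, and $\rho(\wt G)<1$ then follows from $|\xi_{m-1}|<\xi_m=1$ in Lemma~\ref{t:roots}. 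Likewise your description of $\ddot G$ is muddled: $\wh G$ has no ``other eigenvalue at $1$''; applying Brauer to $\ddot G^T=\wh G^T+\bm v_{\wh G}(\bm w_G+\wh H^{-1}\bm w_{\wh R})^T$ shows that the eigenvalue $1$ is sent to $1+\bm v_{\wh G}^T\bm w_G+\bm v_{\wh G}^T\wh H^{-1}\bm w_{\wh R}=1+1-1=1$, i.e.\ the spectrum of $\ddot G$ coincides with that of $\wh G$, whence $\rho(\ddot G)=1$. (ii) The real gap is in the last step: the telescoping computation $(\wt G\ddot G-I)\wt W=-(U-I)^{-1}$, modelled on Lemma~\ref{LemmaInVDbl}, requires the intertwining relation $\ddot G\wt W=\wt W R$, the shifted analogue of \eqref{RelGandR}. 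You neither state nor prove this identity, it does not follow from anything else in your outline, and it is precisely the canonical-factorization content of \cite{blm:shift} that must be imported or reproved. The convergence of $\wt W$ is fine since $\rho(\wt G)\rho(R)<1$, but without $\ddot G\wt W=\wt W R$ your argument for $\det(I-\wt G\ddot G)\ne 0$ and for the invertibility of $\wt W$ does not close.
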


We recall that in the above theorem, the scalar product 
$\bm v_{\wh G}^T\wh H^{-1}\bm
w_{\wh R}$ is always non zero \cite{blm:shift}.

\begin{theorem}[Left shift]\label{thm:ls}
Take $S=\bm w_{\wh R}\bm v_{R}^T$, with $\bm v_{R}^T\bm w_{\wh
  R}=1$ and
define
\[
\wt A_{-1}=A_{-1}, \qquad \wt A_{0}=A_0+SA_{-1},  \qquad \wt
A_{1}=(I-S)A_1.
\]
 Normalize $\bm v_{\wh G}$ so that $\bm v_{\wh G}^T\wh H^{-1}\bm w_{\wh R}=-1$.  

The matrix
 equations
\[
\wt A_{-1}+(\wt A_0-I)X+\wt A_1X^2=0,\qquad \mbox{and} \qquad \wt A_{1}+(\wt A_0-I)X+\wt A_{-1}X^2=0
\]
have the solutions $G$ and $\ddot{G}=\wh G+\wh
H^{-1}\bm w_{\wh R}\bm v_{\wh G}^T$, respectively. Moreover,
$\rho(G)=1$ and $\rho(\ddot{G})<1$. 
\end{theorem}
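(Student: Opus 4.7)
The plan is to verify the two substitution identities by direct algebra, using consequences of \eqref{Geq}, \eqref{Ghateq} and \eqref{RfctU}, and then to invoke the spectral-shift principle of \cite{blm:shift} for the spectral radius statements.

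First I would check that $X=G$ satisfies $\wt A_{-1}+(\wt A_0-I)X+\wt A_1 X^2=0$. Substituting and subtracting \eqref{Geq} leaves the residual $S(A_{-1}G-A_1 G^2)$. From \eqref{Geq} and $U=A_0+A_1G$ one has $A_{-1}=(I-U)G$, while \eqref{RfctU} gives $A_1=R(I-U)$, hence $A_1 G^2=R A_{-1}G$ and $A_{-1}G-A_1G^2=(I-R)A_{-1}G$. Since $\bm v_R^T R=\bm v_R^T$, the factor $\bm v_R^T(I-R)$ vanishes, so $S(A_{-1}G-A_1G^2)=\bm 0$, closing this part.

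Next I would verify that $\ddot G=\wh G+\bm a\bm v_{\wh G}^T$, with $\bm a=\wh H^{-1}\bm w_{\wh R}$, satisfies the second equation. The normalization $\bm v_{\wh G}^T\bm a=-1$ together with $\bm v_{\wh G}^T\wh G=\bm v_{\wh G}^T$ yields $\ddot G^2=\wh G^2+\wh G\bm a\bm v_{\wh G}^T$. Substituting, subtracting \eqref{Ghateq}, and using $\wh H\bm a=\bm w_{\wh R}$ reduces the residual to $\bm w_{\wh R}\bm r^T$, where $\bm r^T=(1+\bm v_R^T A_{-1}\bm a)\bm v_{\wh G}^T+\bm v_R^T(A_{-1}\wh G-A_1)$. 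The relation $A_1=-\wh H\wh G$ (an immediate rearrangement of \eqref{Ghateq}) combined with $\bm v_R^T(A_{-1}+A_0-I+A_1)=\bm 0$ yields the key identity $\bm v_R^T(\wh H+A_{-1})(I-\wh G)=\bm 0$. Since $1$ is a simple eigenvalue of $\wh G$ in the null-recurrent case, this forces $\bm v_R^T(\wh H+A_{-1})=c\bm v_{\wh G}^T$ for some scalar $c$. Evaluating at $\bm a$ and using $\wh H\bm a=\bm w_{\wh R}$, $\bm v_R^T\bm w_{\wh R}=1$, and $\bm v_{\wh G}^T\bm a=-1$, one finds $c+1+\bm v_R^T A_{-1}\bm a=0$. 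Substituting back gives $\bm r^T=\bm 0$.

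The spectral radius statements then follow from the general shift principle of \cite{blm:shift}: the left shift replaces the eigenvalue $1$ of $\wh G$ by $0$ while leaving every eigenvalue of $G$ in place; hence $\rho(\ddot G)<1$ and $\rho(G)=1$. The main obstacle is the scalar identity in the second step: the normalizations $\bm v_R^T\bm w_{\wh R}=1$ and $\bm v_{\wh G}^T\wh H^{-1}\bm w_{\wh R}=-1$ are precisely what makes $c+1+\bm v_R^T A_{-1}\bm a=0$ hold, and this is the algebraic reason for the specific form of these normalizations.
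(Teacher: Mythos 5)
Your verification is correct, but note that the paper itself gives no proof of this statement: Theorems \ref{thm:rs} and \ref{thm:ls} are imported verbatim from \cite{blm:shift} (``The main results of \cite{blm:shift} concerning the right and the left shift are as follows''), so there is nothing internal to compare against. Your direct algebraic check is therefore a genuine addition. I traced the computations: the residual for $X=G$ is indeed $S(A_{-1}G-A_1G^2)=\bm w_{\wh R}\bm v_R^T(I-R)A_{-1}G=0$ via $A_{-1}=(I-U)G$ and $A_1=R(I-U)$; and for $\ddot G$ the residual collapses to $\bm w_{\wh R}\bm r^T$ with $\bm r^T=(1+\bm v_R^TA_{-1}\bm a)\bm v_{\wh G}^T+\bm v_R^T(\wh H+A_{-1})\wh G$, which vanishes by your identity $\bm v_R^T(\wh H+A_{-1})(I-\wh G)=\bm v_R^T(A_{-1}+A_0-I+A_1)=\bm 0$ together with simplicity of the eigenvalue $1$ of $\wh G$ and the two normalizations. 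This cleanly exposes \emph{why} the normalization $\bm v_{\wh G}^T\wh H^{-1}\bm w_{\wh R}=-1$ is the right one, which the paper only asserts. The one place where you fall back on citation is the spectral statement $\rho(\ddot G)<1$; you could make this self-contained at no cost by invoking Brauer's theorem on rank-one updates: since $\bm v_{\wh G}^T\wh G=\bm v_{\wh G}^T$, the spectrum of $\ddot G=\wh G+\bm a\bm v_{\wh G}^T$ is that of $\wh G$ with the simple eigenvalue $1$ replaced by $1+\bm v_{\wh G}^T\bm a=0$, and all remaining eigenvalues of $\wh G$ have modulus strictly less than $1$ by Lemma \ref{t:roots}; $\rho(G)=1$ is immediate since $G$ is unchanged and stochastic in the null recurrent case.
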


For the next developments, it is useful to reformulate the difference equation \eqref{matrixequation} in the following functional form
\begin{equation}\label{eq:ff}
\eta(\lambda)\boldsymbol{u}(\lambda^{-1})=
{\boldsymbol{k}}_{-1}\lambda^2+{\boldsymbol{k}}_0\lambda-\sum_{j\ge 1}\boldsymbol{g}_j\lambda^{-j+1},\end{equation}
where $\boldsymbol u(\lambda)=\sum_{i=0}^\infty \boldsymbol{u}_i\lambda^i$, 
with 
${\boldsymbol{k}}_{-1}=A_1\boldsymbol{u}_0$ and
 ${\boldsymbol{k}}_0=(A_0-I)\boldsymbol{u}_0+A_1\boldsymbol{u}_1$.

\subsection{Solution based on the right shift}
Define $\wt{\eta}(\lambda)=\eta(\lambda)(I-\frac1{1-\lambda}Q)$.  It follows from 
\cite{blm:shift} that
\[
\wt\eta(\lambda)=\wt A_{-1}+\lambda(\wt A_0-I)+\lambda^2\wt A_1,
\]
where the matrices $\wt A_i$, $i=-1,0,1$,  are defined in Theorem
\ref{thm:rs}.  We may associate with the matrix polynomial $\wt \eta(\lambda)$ the matrix difference equation
\begin{equation}\label{eq:difftilde}
\wt A_{-1}\wt{\bm u}_{r}+(\wt A_0-I)\wt{\bm u}_{r+1}+\wt A_1\wt{\bm u}_{r+2}=-\bm g_{r+1}
\end{equation}

Our goal is to express  the solutions $\wt{\bm u}_r$ of the above difference
equation by means of standard triples, using the solutions
$\wt G$ and $\ddot{G}$ given in Theorem \ref{thm:rs}, and to relate these solutions to the general solution of the original matrix difference equation \eqref{matrixequation}.

By Theorem \ref{thm:rs}, the matrices $I-\wt G\ddot{G}$ and $\wt W$ are both
nonsingular.  Knowing this, we follow the steps in Lemma
\ref{decompair} and Theorem \ref{ThmforZ} and obtain a resolvent
triple for $\wt\eta(\lambda)$.  We apply Theorem \ref{MDET} and obtain the
general solution of \eqref{eq:difftilde}.

The solutions of \eqref{eq:difftilde} and those of \eqref{matrixequation} are related in a simple manner. Observe that the product 
$\eta(\lambda)\boldsymbol{u}(\lambda^{-1})$ is such that
\[
\eta(\lambda)\boldsymbol{u}(\lambda^{-1})=\eta(\lambda)(I-\frac1{1-\lambda}Q)(I-\frac1{1-\lambda}Q)^{-1}{\bm u}(\lambda^{-1})=\wt\eta(\lambda)\wt{\boldsymbol{u}}(\lambda^{-1}),
\]
where
$\widetilde{\bm u}(\lambda)$
is defined by
\begin{equation}\label{eq:uut1}
{\bm u}(\lambda)=(I-\frac \lambda{\lambda-1}Q)\wt{\bm u}(\lambda).
\end{equation}
Since $\wt\eta(\lambda)\wt{\bm u}(\lambda^{-1})=\eta(\lambda)\bm u(\lambda^{-1})$, from \eqref{eq:ff} we deduce that 
\[
\wt\eta(\lambda)\wt{\boldsymbol{u}}(\lambda^{-1}) =
{\boldsymbol{k}}_{-1}\lambda^2+{\boldsymbol{k}}_0\lambda-\sum_{j\ge
  1}\boldsymbol{g}_j\lambda^{-j+1}.
\]
Multiplying both sides of \eqref{eq:uut1} by $\lambda-1$ and comparing the terms with the same degree in $\lambda$ yields
\begin{equation}\label{eq:uut}
{\bm u}_0=\wt{\bm u}_0,\quad {\boldsymbol{u}}_k=\wt{\boldsymbol{u}}_k+Q\sum_{i=0}^{k-1}
\wt{\boldsymbol{u}}_i,  \quad  k\ge 1.
\end{equation}
and so 
$\bm k_{-1} = \wt A_{1} \wt {\bm u}_0$
and
$\bm k_{0} = (\wt A_{0}-I)\wt {\bm u}_0+ \wt A_{1} \wt {\bm u}_1$,
that is, the vector sequence $\wt{\bm u}_r$ solves \eqref{eq:difftilde}.
This proves that we may recover
the general solution of the original equation from
\eqref{eq:uut}.

In  view of Lemma \ref{Lemma52}, the general solution of the matrix difference equation \eqref{eq:difftilde} may be expressed  in the following form
\[
\begin{split}
&\wt {\bm u}_r=\wt G^r{\bm x} +\wt L\wt V_1^{-r}{\bm y}+\wt {\bm\sigma}_r,\quad r\ge 0\\
&\wt{\bm\sigma}_r=-\sum_{k=1}^{r}
\left(
\wt G^{r-k}-\wt L\wt V_1^{k-r}\wt E 
\right)\wt W\boldsymbol{g}_{k}
-
\sum_{j=1}^{\nu-1}
\wt K\wt V_0^{j}\wt F\wt W\boldsymbol{g}_{j+r},\quad r\geq 0,
\end{split}
\]
for any vectors $\bm x$ and $\bm y$
 where
\[
\ddot G\wt M=\wt M\wt J,\quad \wt J=\begin{bmatrix}\wt V_1&0\\ 0&\wt V_0\end{bmatrix},\quad
\wt M=\left[ \wt L \;  \text{\rm{\textbrokenbar}} \; \wt K\right]
\]
with $\rho(\wt V_0)=0$, $\det \wt V_1\ne 0$, and
 $\wt L$ and $\wt K$ are matrices of size $m\times p$ and $m\times(m-p)$, respectively;
moreover
\[
\wt M^{-1}=\begin{bmatrix}\wt E\\ \wt F\end{bmatrix}
\]
where $\wt E$ and $\wt F$ are matrices of size $p\times m$ and $(m-p)\times m$, respectively.

Now it remains to  analyze the solution which satisfies the initial conditions \eqref{InitialCondEq}. 
To this end we assume that the series $\sum_{k=0}^\infty R^k \bm g_k$ is convergent.

Observe that, since $\bm u_0=\wt{\bm u}_0$, and $\bm u_1=\wt{\bm u}_1+Q \wt{\bm u}_0$, the initial condition \eqref{InitialCondEq} can be rewritten as
\[
(\wt B-I)\wt {\bm u}_0+A_1\wt{\bm u}_1=-\bm g_0,\quad \wt B=B+A_1Q.
\]
Rewriting the above equation in terms of the vectors $\bm x$ and $\bm y$ and exploiting the identities $\wt G=G-Q$, $\wt L\bm y=\ddot G\wt L\wt V_1^{-1} \bm y$  and $\wt{\bm \sigma}_0=\ddot G\wt{\bm\sigma}_1$ yields
\[
(I-B-A_1G)\bm x=\bm g_0+\left( (\wt B-I)\ddot G+A_1\right)(\wt{\bm \sigma}_1+\wt L\wt V_1^{-1}\bm y). 
\]
The matrix $P_*=B+A_1G$ is stochastic and $\bm \pi_0^T (I-P_*)=0$ so that the above system has a solution if and only if the following condition is satisfied
\begin{equation}\label{eq:condexist}
\bm\pi_0^T\bm g_0+\bm\pi_0^T\left( (\wt B-I)\ddot G+A_1)\right)(\wt{\bm \sigma}_1+\wt L\wt V_1^{-1}\bm y)
=0.
\end{equation}
Observe that $P_*=I-\wt B - A_1 \wt G$ and therefore  
 $\bm\pi_0^T(I-\wt B-A_1\wt G)=0$
 from which we get $\bm\pi_0^T ( (\wt B-I)\ddot G+A_1))=\bm\pi_0^TA_1(I-\wt G\ddot G)$.  Since $\det (I-\wt G\ddot G)\ne 0$,  we may proceed as in the proof of Theorem \ref{mainresult} and arrive at the following equivalent formulation of condition \eqref{eq:condexist}
\[
\bm\pi_0^T\wt W^{-1}\wt L(\bm y-\wt{\bm y}^*) =\bm\pi^T\bm g,
\]
where 
$\wt{\bm y}^*=-\sum_{k=0}^\infty \wt V_1^k\wt E\wt W\bm g_k$.
 Observe that the definition of $\wt {\bm y}^*$ is consistent since  we assumed that $\sum_{k=0}^\infty R^k \bm g_k$ is finite. The latter property implies also that $\bm\pi^T\bm g$ is finite.

\subsection{Solution based on the left shift}

Instead of shifting $\xi_m$ to 0 (or equivalently, replacing the
maximal eigenvalue of $G$ by 0), we may shift $\xi_{m+1}$ to $\infty$,
and so shift the maximal eigenvalue of $\wh G$ to $1/\xi_{m+1}=0$.
This requires us to use the left shift and to modify the right-hand
side of (\ref{matrixequation}).

Rewrite \eqref{eq:ff} in the following form:
\begin{equation}
   \label{e:tb}
(I-\frac \lambda{\lambda-1}S)\eta(\lambda) \boldsymbol{u}(\lambda^{-1})=
(I-\frac \lambda{\lambda-1}S)({\boldsymbol{k}}_{-1}\lambda^2+{\boldsymbol{k}}_0\lambda-\sum_{j\ge 1}\boldsymbol{g}_j\lambda^{-j+1}),
\end{equation}
where $S=\boldsymbol{w}_{\wh R}\boldsymbol{v}_R^T$,
$\boldsymbol{v}_R^T\boldsymbol{w}_{\wh R}=1$,
and get
\[
\wt\eta(\lambda)\boldsymbol{u}(\lambda^{-1})=\wt{\boldsymbol{g}}(\lambda)
\]
with $\wt\eta(\lambda)=(I-\frac \lambda{\lambda-1}S)\eta(\lambda)$ and $\wt{\bm g}(\lambda)$ is the
right-hand side of (\ref{e:tb}).

Recall that $R$ is the minimal nonnegative solution of (\ref{Req}) and
that $\bm v_{R}$ is its left eigenvector corresponding to the
eigenvalue 1.  Thus,
\[
S(A_1 + (A_0-I) +A_{-1}) = S(A_1 + R (A_0-I) + R^2 A_{-1}) = 0
\]
and so $S(A_0-I) = -S(A_1 +A_{-1})$.   One readily verifies that 
$\wt\eta(\lambda)=\wt A_{-1}+\lambda(\wt A_0-I)+\lambda^2\wt A_1$, where 
$\wt A_{i}$, $i=-1,0,1$ are defined in Theorem \ref{thm:ls}.

If the function $\sum_{j\ge 1}\boldsymbol{g}_j\lambda^{-j+1}$ is analytic
for $|\lambda^{-1}|<c$ for some $c> 0$, since 
$ \lambda/(\lambda-1)= 1/(1-\lambda^{-1})$ is analytic for $|\lambda^{-1}|<1$, 
then 
the function $\wt{\boldsymbol{g}}(\lambda)$ is analytic for $|\lambda^{-1}|<\min (c,1)$,
and  we may write
$\wt{\boldsymbol{g}}(\lambda)=\sum_{i=-2}^{+\infty}\wt{\boldsymbol{g}}_i\lambda^{-i}$, where the coefficients
$\wt{\boldsymbol{g}}_r$ can be explicitly expressed as functions of $S$, ${\boldsymbol{k}}_{-1}$, ${\boldsymbol{k}}_{0}$,  and $\boldsymbol{g}_j$ for $j\ge 1$.

Thus, we obtain a modified difference equation in the form
\[
\wt A_{-1}{\boldsymbol{u}}_{r-1}+(\wt A_0-I){\boldsymbol{u}}_r+\wt
A_1\boldsymbol{u}_{r+1} =\wt{\boldsymbol{g}}_r.
\]
 In view of Theorem \ref{thm:ls}, the matrix equations associated
 with the matrix polynomial $\wt \eta(\lambda)$ have solutions $\wt G$ and
 $\ddot{G}$ such that $I-\wt G \ddot{G}$ is nonsingular, we may
 apply the technique of standard triples and obtain the explicit solution
 of the difference equation.

Observe that in this case, unlike the shift to the right, the solution of the transformed matrix difference  equation coincides with the solution of the original equation. Thus we do not need to reconstruct one solution from the other one.
On the other hand, with the shift to the left we have to compute a different right-hand side.


The two techniques of shifting to the right and to the left can be combined together to obtain another possible representation of the solution. We leave the details to the reader.


\section{Comparison with published results} \label{SectionComp}

Solutions of the Poisson equation are constructed in
\cite{dendievel2013poisson} under the assumptions that the QBD is
positive recurrent,
 that $\bm\pi_0^T \sum_{k=0}^\infty R^k \|\bm
g_k\| < \infty$,
and that $\bm\pi_0^T \sum_{k=0}^\infty R^k \bm g_k =0$.
The approach there is based on a probabilistic
argument, and a particular solution, up to an additive constant, is  written,
for $r\geq 0$, as 
\begin{align}\label{solYGS}
{\boldsymbol{\omega}}_{r}
&= G^{r}{\boldsymbol{\gamma}}+{\boldsymbol{y}}_{r}+ c \boldsymbol{1}
,
\end{align}
where $c$ is any arbitrary constant,
\begin{align}\label{eq:gamma}
{\boldsymbol{\gamma}}
&= (I-P_{*})^{\#} \sum_{k=0}^{\infty}R^{k}
\boldsymbol{g}_{k} 
\text{\qquad  and  \qquad}
{\boldsymbol{y}}_{r}
 =- \sum_{k=0}^{\infty}\sum_{j=0}^{r-1}G^{j}\left(U-I\right)^{-1}R^{k}
\boldsymbol{g}_{r+k-j}.
\end{align}

This solution has a different aspect from \eqref{OurSolution};
in particular, the right-hand sides of \eqref{eq:gamma} are expressed as
series while we have finite sums only in \eqref{OurSolution}, which is
more convenient for computational purposes. 
We show below that, for positive recurrent QBDs, the solution obtained
by probabilistic reasoning is identical to the solution from
Theorem \ref{mainresult}, 
for the specific choice of $\boldsymbol{y}=\bm y^*$
defined  in \eqref{eq:y}.  This is proved in Lemma~\ref{t:LEM}.   What
is more, we show in Lemma~\ref{t:nullrec} that the vectors $\bm
\omega_r$ actually form a solution of the Poisson equation for
transient or null recurrent QBDs also.

Our condition throughout is that  
$\sum_{k=0}^{\infty}
\rho(R)^k \|\boldsymbol{g}_k\| $
 should be a convergent series for some vector norm $\|\cdot\|$, and then automatically for any vector norm.  For
transient and for null recurrent QBDs, $\rho(R) =1 $ and this imposes a strong constraint on $\bm g$.
One immediate advantage stemming from
the constraint is that the series in (\ref{eq:gamma}) are all
convergent, and so the $\bm y_r$s are all well defined.  To see this,
we choose a vector norm such that $\|R\| = \rho(R)$ and we write
\begin{align*}
\|\bm y_r\| & \leq \|U-I\|^{-1} \sum_{k=0}^{\infty}\sum_{j=0}^{r-1}
\|G\|^{j} \|R\|^{k} \|\boldsymbol{g}_{r+k-j}\|  \\
 & = \|U-I\|^{-1} \sum_{k=0}^{\infty}\sum_{j=0}^{r-1}
\|G\|^{j} \rho(R)^{k} \|\boldsymbol{g}_{r+k-j} \| \\
 & = \|U-I\|^{-1} \sum_{j=0}^{r-1}    \|G\|^{j} \rho(R)^{j-r} \sum_{k=r-j}^{\infty}
\rho(R)^{k} \|\boldsymbol{g}_k\|
\end{align*}
which converges by assumption.

\begin{lemma}
   \label{t:LEM}
  Assume that the QBD is positive recurrent, that
  $\sum_{k=0}^{\infty} \rho(R)^k\|\boldsymbol{g}_k\| < \infty$ and that
  $\bm\pi_0^T \sum_{k=0}^\infty R^k \bm g_k =0$.
  Equation \eqref{solYGS} may be written as
\[
{\boldsymbol{\omega}}_{r}
=
G^r \boldsymbol{ x} 
+LV_1^{-r} \boldsymbol{y}^*
+ \boldsymbol{\sigma}_{r}
\]
where $\boldsymbol{y}^*$ is defined in \eqref{eq:y},
$\boldsymbol{x}$ is defined in \eqref{ExpressX} with $\bm y$ replaced
by $\bm y^*$,
and
$\bm \sigma_{r}$ is defined in \eqref{sigmar}.
\end{lemma}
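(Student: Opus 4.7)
The strategy is to start from the probabilistic expression (\ref{solYGS})--(\ref{eq:gamma}) and recast it into the target form $G^r\bm x+LV_1^{-r}\bm y^{*}+\bm\sigma_r$. The plan splits into two algebraic manipulations with a short structural comparison at the end.

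For the ``right-hand'' half, I would compress $LV_1^{-r}\bm y^{*}+\bm\sigma_r$ using two structural identities. First, since $\wh G=MJM^{-1}$ with $M=[L\,|\,K]$, $M^{-1}=[E;F]$ and $J=\diag(V_1,V_0)$, one has $\wh G^{j}=LV_1^{j}E+KV_0^{j}F$ for every $j\ge 0$; second, iterating (\ref{RelGandR}) gives $\wh G^{j}W=WR^{j}$. Substituting (\ref{eq:y}) and (\ref{sigmar}), telescoping the $V_1^{k-r}E$ terms against $\bm y^{*}$, and extending the $KV_0^{j}F$ sum to infinity (using $V_0^{j}=0$ for $j\ge\nu$), these identities combine to yield
\[
LV_1^{-r}\bm y^{*}+\bm\sigma_r=-\sum_{k=1}^{r}G^{r-k}W\bm g_k-\sum_{k=r+1}^{\infty}WR^{k-r}\bm g_k.
\]

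For the ``left-hand'' half, I would rewrite $\bm y_r$ by swapping the order of the double sum in (\ref{eq:gamma}). Setting $n=r+k-j$ and splitting the two regimes $n\le r$ and $n>r$, a routine reindexing gives
\[
\bm y_r=-\sum_{n=1}^{r}G^{r-n}W_n\bm g_n-\sum_{n=r+1}^{\infty}W_rR^{n-r}\bm g_n,\qquad W_s:=\sum_{j=0}^{s-1}G^{j}(U-I)^{-1}R^{j}.
\]
Since $G$ and $R$ commute with their own powers, the tail of the defining series for $W$ factors as $W-W_s=G^{s}WR^{s}$. Substituting this and merging the finite and infinite sums gives
\[
\bm y_r=-\sum_{n=1}^{r}G^{r-n}W\bm g_n-\sum_{n=r+1}^{\infty}WR^{n-r}\bm g_n+G^{r}W\sum_{n=1}^{\infty}R^{n}\bm g_n.
\]
The main obstacle in the whole argument is this rearrangement: the interchange of summations must be justified, which follows from the hypothesis $\sum_{k}\rho(R)^{k}\|\bm g_k\|<\infty$ together with the bound $\|R^{k}\|\le C(\epsilon)(\rho(R)+\epsilon)^{k}$ in a suitably chosen norm.

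Combining the two halves with (\ref{solYGS}) yields $\bm\omega_r=G^{r}\bm x'+LV_1^{-r}\bm y^{*}+\bm\sigma_r$ with $\bm x'=\bm\gamma+W\sum_{n\ge 1}R^{n}\bm g_n+c\bm 1$, where the constant $c\bm 1$ has been absorbed into $G^{r}\bm x'$ via $G^{r}\bm 1=\bm 1$. It remains to verify that $\bm x'$ coincides, up to an additive multiple of $\bm 1$, with the vector $\bm x$ characterized by (\ref{ExpressX}) for $\bm y=\bm y^{*}$. Since both $\bm\omega_r$ and $G^{r}\bm x+LV_1^{-r}\bm y^{*}+\bm\sigma_r$ solve the Poisson equation, their difference is a homogeneous solution; the coefficient of $LV_1^{-r}$ already vanishes, so the difference reduces to $G^{r}\bm z$ for some vector $\bm z$. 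Imposing (\ref{InitialCondEq}) with zero right-hand side on this homogeneous term gives $(I-P_{*})\bm z=\bm 0$, and since $P_{*}$ is irreducible stochastic, $\bm z$ is a scalar multiple of $\bm 1$. This multiple is precisely the $\alpha\bm 1$ freedom present in (\ref{ExpressX}), completing the identification.
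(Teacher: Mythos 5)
Your proposal is essentially correct, and the bulk of it --- the rearrangement of the double series defining $\bm y_r$ and the compression of $LV_1^{-r}\bm y^* + \bm\sigma_r$ via $\wh G^{j}=LV_1^{j}E+KV_0^{j}F$ and $\wh G^{j}W=WR^{j}$ --- is the same computation the paper performs, merely organized around the common canonical form $-\sum_{k\le r}G^{r-k}W\bm g_k-\sum_{k>r}WR^{k-r}\bm g_k$ rather than transforming $\bm y_r$ directly into $LV_1^{-r}\bm y^*+\bm\sigma_r+G^r\bm\zeta$. Where you genuinely diverge is the last step. The paper identifies $\bm x$ with $\bm\gamma+\bm\zeta$ (modulo $\bm 1$) by an explicit calculation: it rewrites $A_1(\bm\sigma_1+LV_1^{-1}\bm y^*)$ using \eqref{e:utile} and Lemma~\ref{LemmaInVDbl}, and then invokes the characterization \eqref{eq:groupinv} of the group inverse. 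You instead argue by uniqueness: both $\bm\omega_r$ and the expression from Theorem~\ref{mainresult} with $\bm y=\bm y^*$ solve the full Poisson equation, their difference is $G^r\bm z$ with $(I-P_*)\bm z=\bm 0$, hence $\bm z\in\mathrm{span}\{\bm 1\}$, which is absorbed by the $\alpha\bm 1$ freedom in \eqref{ExpressX}. This is valid and arguably cleaner, but note what it leans on: (a) that $\bm\omega_r$ is indeed a solution, which here must be imported from \cite{dendievel2013poisson} (legitimate, since the paper presents \eqref{solYGS} as an established particular solution, and no circularity with Lemma~\ref{t:nullrec} arises); and (b) that $\bm y=\bm y^*$ is an admissible choice in Theorem~\ref{mainresult}, i.e.\ that $\bm y_\perp=\bm 0$ satisfies the hyperplane constraint --- this holds precisely because the hypothesis $\bm\pi_0^T\sum_k R^k\bm g_k=0$ gives $\bm\pi^T\bm g=0$, and you should say so explicitly. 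The paper's direct computation buys self-containedness; your argument buys brevity at the cost of these two external inputs.

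One technical point needs repair. To justify the interchange of summations you invoke $\|R^k\|\le C(\epsilon)(\rho(R)+\epsilon)^k$, but the hypothesis is only $\sum_k\rho(R)^k\|\bm g_k\|<\infty$, and this does not control $\sum_k(\rho(R)+\epsilon)^k\|\bm g_k\|$ (take $\|\bm g_k\|\sim\rho(R)^{-k}k^{-2}$). You need a norm in which $\|R\|=\rho(R)$ exactly, as the paper uses; such a norm exists because $\rho(R)=1/\xi_{m+1}$ is, by Lemma~\ref{t:roots}, a simple eigenvalue of $R$ and the only one on its spectral circle. With that substitution the rearrangement is justified by Tonelli's theorem and your proof goes through.
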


\begin{proof} 
 We write
\begin{align*}
\boldsymbol{y}_{r}  &=
- \sum_{k=0}^{\infty}  \sum_{j=0}^{k+r-1} G^{j}(U-I)^{-1}R^{k}\boldsymbol{g}_{k+r-j}
+ \sum_{k=0}^{\infty}  \sum_{j=r}^{k+r-1}  G^{j}(U-I)^{-1}R^{k}\boldsymbol{g}_{k+r-j}
\\ 
& = - \sum_{k=1}^{\infty}\sum_{j=0}^{k-1}G^{j}(U-I)^{-1}R^{k}\boldsymbol{g}_{k+r-j}-\sum_{k=0}^{\infty}\sum_{j=k}^{k+r-1}G^{j}(U-I)^{-1}R^{k}\boldsymbol{g}_{k+r-j}
 +G^{r}\boldsymbol{\zeta}
\end{align*}
where
\[
\boldsymbol{\zeta}  
=\sum_{k=1}^{\infty}\sum_{j=0}^{k-1}G^{j}(U-I)^{-1}R^{k}\boldsymbol{g}_{k-j}
= \sum_{j=0}^\infty G^j (U-I)^{-1} \sum_{k=1}^\infty R^{k+j} \bm g_k
= W \sum_{k=1}^\infty R^k \bm g_k.
\]
We simplify the first term as
\[
\begin{split}
\sum_{k=1}^{\infty}\sum_{j=0}^{k-1}G^{j}(U-I)^{-1}R^{k}\boldsymbol{g}_{k+r-j} 
& = \sum_{k=1}^{\infty}
\sum_{i=0}^{\infty}
G^{i}(U-I)^{-1}R^{i+k}\boldsymbol{g}_{k+r}
\\
&= \sum_{k=r+1}^{\infty}WR^{k-r}\boldsymbol{g}_{k}
\\
  &=  \sum_{k=r+1}^{\infty}\wh{G}^{k-r}W\boldsymbol{g}_{k},
\end{split}
\]
by definition of $W$ and \eqref{RelGandR}.  The second term becomes
\begin{align*}
\sum_{k=0}^{\infty}\sum_{j=t}^{k+r-1}G^{j}(U-I)^{-1}R^{k}\boldsymbol{g}_{k+r-j}
 & =  \sum_{k=1}^{r}\sum_{t=0}^{\infty}G^{r-k+t}(U-I)^{-1}R^{t}\boldsymbol{g}_{k}\\
 & =  \sum_{k=1}^{r}G^{r-k}W\boldsymbol{g}_{k}.
\end{align*}
Thus,
\begin{equation*}
\boldsymbol{y}_{r}
 = - \sum_{k=r+1}^{\infty}\wh{G}^{k-r}W\boldsymbol{g}_{k}
 -\sum_{k=1}^{r}G^{r-k}W\boldsymbol{g}_{k}
 +G^{r}\boldsymbol{\zeta}.
\end{equation*}
By (\ref{JordanDecompGHat}--\ref{GhKW}, \ref{PartitionMm1}),   since $V_0^{j}=0$ for, $j\geq\nu$, we may 
write
\[
\begin{split}
\boldsymbol{y}_{r} 
 &= - LV_1^{-r}\sum_{k= 1}^{\infty}V_1^{k}EW\boldsymbol{g}_{k}
 +\sum_{k=1}^{r}LV_1^{k-r}EW\boldsymbol{g}_{k}
 -\sum_{j=1}^{\nu-1}KV_0^{j}FW\boldsymbol{g}_{j+r}
\\
 & \qquad-\sum_{k=1}^{r-1}G^{r-k}W\boldsymbol{g}_{k}
 +G^{r}\boldsymbol{\zeta} 
\\
 & = L V_1^{-r} \bm y^* + \bm\sigma_r + G^r \bm\zeta 
\end{split}
\]
and so 
\[
{\bm \omega}_r = G^r (\bm\gamma + \bm\zeta) + L V_1^{-r} \bm y^* +
\bm\sigma_r + c \bm 1.
\]
Finally, we verify that
$G^r \bm x = G^r (\bm\gamma + \bm\zeta)+ c_1\bm 1$, for some scalar $c_1$.
The equation \eqref{e:utile}
may be written as
\begin{equation*}
-W^{-1}\wh G (\bm \sigma_{1}+ LV_1^{-1}\bm y^*)
=\sum_{k=1}^{\infty}R^k \bm g_k
= W^{-1}\bm \zeta.
\end{equation*}
The vector $\bm x$  given in \eqref{ExpressX} may be rewritten as
\begin{align}
\bm x
&= 
(I-P_{*})^{\#}
\left(
-(B-I)\bm \zeta +
 A_1
\left( \bm \sigma_{1} + LV_1^{-1}\bm y^*
\right)
+\bm g_0
\right)
+\alpha \bm 1.
\label{e:utile2}
\end{align}
Furthermore, by repeating the argument 
(\ref{e:un}--\ref{e:utile}), we find that
\begin{equation*}
\bm \sigma_{1} + LV_{1}^{-1} \bm y^*
=
-W \sum_{k=0}^{\infty}R^k \bm g_{k+1}
\end{equation*}
and so
\begin{align*}
A_{1}(\bm \sigma_{1} +LV_1^{-1} \boldsymbol{y^*} )
&= (R-A_1GWR) \sum_{k=0}^\infty
R^k \bm g_{k+1} 
\quad\quad\quad \text{by Lemma \ref{LemmaInVDbl}}
\\
& = \sum_{k=1}^\infty
R^k \bm g_k - A_1 G \bm\zeta,
\end{align*}
and
\eqref{e:utile2}
becomes
\begin{align*}
\bm x &= 
(I-P_*)^{\#} 
\left((I-B-A_1G)\bm \zeta
+ \sum_{k=0}^{\infty} R^k \bm g_k\right)
+\alpha \bm 1
\\
&=
\left(I-(\bm \pi_0^T \bm1)^{-1} \bm 1 \bm \pi_0^T\right)
\bm \zeta
+ \bm \gamma
+\alpha \bm 1
\qquad\qquad \text{by \eqref{eq:groupinv}}
\\
&= \bm \zeta + \bm \gamma+c_2 \bm 1
\end{align*}
where $c_2$ is a scalar.
Since $G$ is stochastic, this completes the proof.
\end{proof}
%


To prove that the vectors $\bm \omega_r$ are always a solution of
(\ref{PoissonEqGen}), even if the QBD is null recurrent or transient,
we may not refer to the vector $\bm y^*$ since the definition
\eqref{eq:y} depends on the matrix $W$, and the series in
\eqref{MatrixH0} diverges in the null recurrent case.  Instead, we
prove by direct verification that (\ref{solYGS}) is a solution of
(\ref{InitialCondEq}, \ref{matrixequation}).

\begin{lemma}\label{t:nullrec}
Assume that 
 $\sum_{k=0}^{\infty}
\rho(R)^k \|\boldsymbol{g}_k\| $ converges,
where $R$ is the minimal nonnegative solution of \eqref{Req}.

If the QBD is recurrent, assume in addition that
$\bm \pi_{*}^{T} \sum_{k=0}^{\infty} R^k \boldsymbol{g}_k=0$, where
$\bm \pi_{*}$ is the stationary distribution of $P_{*}$:
$\bm \pi_{*}^{T}(I-P_{*})=\bm 0$, $\bm \pi_{*}^{T}\bm 1 = 1$.

Under these assumptions, one solution of the Poisson equation \eqref{PoissonEqGen} is given by~\eqref{solYGS}.
\end{lemma}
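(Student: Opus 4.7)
The plan is to verify directly that the vectors $\bm \omega_r$ given by \eqref{solYGS} satisfy both the recurrence \eqref{matrixequation} and the boundary equation \eqref{InitialCondEq}. I decompose $\bm \omega_r = G^r \bm \gamma + \bm y_r + c\bm 1$ and observe that the constant term contributes nothing to either equation, since $(B+A_1)\bm 1 = \bm 1$ in the boundary row of $P$ and $(A_{-1}+A_0+A_1)\bm 1 = \bm 1$ in every interior row. Likewise the component $G^r \bm \gamma$ is annihilated by the left-hand side of \eqref{matrixequation}: multiplying \eqref{Geq} on the right by $G^j$ gives $A_{-1}G^j + (A_0-I)G^{j+1} + A_1 G^{j+2}=0$ for every $j\ge 0$. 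The verification of the recurrence thus reduces to showing that $\bm y_r$ itself solves \eqref{matrixequation}.

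For that step I would swap the order of summation in the definition of $\bm y_r$ and introduce the auxiliary vectors $\bm h_s = \sum_{k\ge 0} R^k \bm g_{s+k}$, which are absolutely convergent under the hypothesis $\sum_{k\ge 0}\rho(R)^k \|\bm g_k\| < \infty$, as is shown by the bound on $\|\bm y_r\|$ already derived in the text. One then has $\bm y_r = -\sum_{j=0}^{r-1} G^j (U-I)^{-1} \bm h_{r-j}$. Substituting into $A_{-1}\bm y_r + (A_0-I)\bm y_{r+1} + A_1\bm y_{r+2}$ and grouping the terms according to the argument $s$ of $\bm h_s$, the coefficients for $1\le s\le r$ vanish by the identity just noted; the coefficient for $s=r+1$ collapses via $A_1 G + A_0 - I = U-I$ (which is \eqref{UfctG}) to $-(U-I)$; and for $s=r+2$ it reduces to $-A_1$. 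Using $A_1 (U-I)^{-1} = -R$ (which follows from \eqref{RfctU}), the total equals $-\bm h_{r+1} + R\,\bm h_{r+2}$, and the telescoping $R\bm h_{r+2} = \bm h_{r+1}-\bm g_{r+1}$ finishes the recurrence.

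To check the boundary equation, I note that $\bm y_0 = \bm 0$ and $\bm y_1 = -(U-I)^{-1} \bm h_1$; after the $c\bm 1$ term cancels as above one obtains
\[
(B-I)\bm \omega_0 + A_1 \bm \omega_1
= -(I-P_*)\bm \gamma + A_1 \bm y_1
= -(I-P_*)\bm \gamma + R\,\bm h_1
= -(I-P_*)\bm \gamma + \textstyle\sum_{k\ge 1} R^k \bm g_k.
\]
In the transient case, $I-P_*$ is nonsingular, $(I-P_*)^{\#} = (I-P_*)^{-1}$, and $(I-P_*)\bm \gamma = \sum_{k\ge 0} R^k \bm g_k$, leaving $-\bm g_0$ as required. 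In the recurrent cases, $P_*$ is stochastic and \eqref{eq:groupinv} gives $(I-P_*)(I-P_*)^{\#} = I - \bm 1\bm \pi_*^T$; the additional hypothesis $\bm \pi_*^T\sum_{k\ge 0} R^k \bm g_k = 0$ then yields the same identity $(I-P_*)\bm \gamma = \sum_{k\ge 0} R^k \bm g_k$, so the boundary equation again reduces to $-\bm g_0$.

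The principal technical hurdle will be the bookkeeping in the double series of step two: one has to justify the change in the order of summation and the shift $s = r+k-j\mapsto r-j$ in the index. The convergence assumption guarantees absolute convergence of every series that appears, so Fubini's theorem legitimises all rearrangements; once that is granted the proof is purely algebraic and uses only \eqref{Geq}, \eqref{RfctU} and \eqref{UfctG}.
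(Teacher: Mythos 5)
Your proof is correct and follows essentially the same direct-verification route as the paper's own treatment of the null recurrent and transient cases: the identities you invoke (\eqref{Geq} applied to the powers $G^{r-s}$, the relation $U-I=A_0-I+A_1G$ from \eqref{UfctG}, $A_1(I-U)^{-1}=R$ from \eqref{RfctU}, the telescoping of $R\bm h_{r+2}$, and the group-inverse identity \eqref{eq:groupinv} at the boundary level) are exactly those used in the text, with your $\bm h_s$ playing the role of the paper's $\bm z_n=(I-U)^{-1}\bm h_n$ up to sign. The only divergence is that you run the same uniform computation in the positive recurrent case as well, whereas the paper disposes of that case by appealing to Lemma~\ref{t:LEM}; this is a harmless, arguably cleaner, unification.
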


\begin{proof}
For positive recurrent  QBDs, 
$\bm \pi_{0}$ is proportional to 
$\bm \pi_{*}$, and so  the statement immediately results from Lemma~\ref{t:LEM}.

For null recurrent QBDs, we have
\begin{align*}
(B-I){\bm \omega}_0 +A_1 {\bm \omega}_1
&=
(B-I)\bm\gamma + A_1 G \bm\gamma +A_1 \bm y_1
\\
&=
 (P_*-I) \bm\gamma 
-A_1  \sum_{k=0}^{\infty} (U-I)^{-1} R^k \bm g_{k+1}
 \\
&=
(P_*-I)(I-P_*)^{\#} \sum_{k= 0}^{\infty} R^k \bm g_k + \sum_{k= 1}^{\infty}  R^k \bm g_k
\\
&=
(\bm 1\bm\pi_{*}^{T}-I) \sum_{k= 0}^{\infty} R^k \bm g_k 
+ \sum_{k=1}^{\infty}  R^k \bm g_k
\qquad \text{by \eqref{eq:groupinv}}
\\
&=
- \bm g_0.
\end{align*}
By \eqref{eq:gamma}, 
$\bm y_r=\sum_{j=0}^{r-1}G^{j}\bm z_{r-j}$,
with
$\bm z_n = (I-U)^{-1}\sum_{k=0}^{\infty}R^k \bm g_{n+k}$,
so that
\begin{align*}
A_{-1}{\bm \omega}_r + (A_0-I){\bm \omega}_{r+1}
+&A_1 {\bm \omega}_{r+2}
=
A_{-1}\bm y_r+(A_0-I)\bm y_{r+1}+A_1\bm y_{r+2}
\\
=&
(A_{-1}+(A_0-I)G+A_{1}G^2)
\sum_{j=0}^{r-1}G^{j}\bm z_j
\\
& + (A_0-I+A_1 G)\bm z_{r+1}
+ A_1 \bm z_{r+2}
\\
 =& - \sum_{k=0}^{\infty}R^k \bm g_{r+1+k}
+ A_1 (I-U)^{-1}\sum_{k=0}^{\infty}R^{k} \bm g_{r+2+k}
\\
=& - \bm g_{r+1}
\end{align*}
by equations (\ref{RfctU},\ref{UfctG}).

For transient QBDs, $P_*$ is sub-stochastic, 
$(I-P_*)^{\#}=(I-P_*)^{-1}$
and the same calculation as above apply. 
\end{proof}
%


\begin{remark} \em
It is interesting to note that the solution obtained by probabilistic reasoning, in the case of positive recurrent QBDs, corresponds to the solution (\ref{OurSolution}, \ref{ExpressY}, \ref{ExpressX}) with $\bm y_\perp = \bm 0$; this is another emphasis placed on the role played by the vector $\bm y^*$.   It is remarkable, in addition, that we should have found solutions of the Poisson equation, even when the process is null recurrent or even transient.  This is another illustration of the nice properties stemming from the transition structure of QBDs.
\end{remark}


\subsubsection*{Acknowledgements}
Guy Latouche and Sarah Dendievel thank the Minist\`ere de la
Communaut\'e fran\c{c}aise de Belgique for supporting this research
through the ARC grant AUWB-08/13--ULB~5. Dario A. Bini and Beatrice
Meini thank GNCS of INdAM for supporting this research, and
acknowledge the financial support of Pisa University through the PRA
project ``Mathematical models and computational methods for complex
networks''.


\end{document}